\def\YEAR{\year}\newcount\VOL\VOL=\YEAR\advance\VOL by-1995
\def\firstpage{1}\def\lastpage{1000}
\def\received{}\def\revised{}
\def\communicated{}
\def\magnification{\afterassignment\m@g\count@}
\def\m@g{\mag=\count@\hsize6.5truein\vsize8.9truein\dimen\footins8truein}
\font\eightrm=cmr8
\font\caps=cmcsc10                    
\font\Caps=cmcsc10 scaled \magstep1   
\def\DocMath{}
\renewcommand{\@evenhead}{%
    \ifnum\thepage>\lastpage\rlap{\thepage}\hfill%
    \else\rlap{\thepage}\slshape\leftmark\hfill{\caps\SAuthor}\hfill\fi}%
\renewcommand{\@oddhead}{%
    \ifnum\thepage=\firstpage{\DocMath\hfill\llap{\thepage}}%
    \else{\slshape\rightmark}\hfill{\caps\STitle}\hfill\llap{\thepage}\fi}%
\def\TSkip{\bigskip}
\newbox\TheTitle{\obeylines\gdef\GetTitle #1
\ShortTitle  #2
\SubTitle    #3
\Author      #4
\ShortAuthor #5
\EndTitle
{\setbox\TheTitle=\vbox{\baselineskip=20pt\let\par=\cr\obeylines%
\halign{\centerline{\Caps##}\cr\noalign{\medskip}\cr#1\cr}}%
	\copy\TheTitle\TSkip\TSkip%
\def\next{#2}\ifx\next\empty\gdef\STitle{#1}\else\gdef\STitle{#2}\fi%
\def\next{#3}\ifx\next\empty%
    \else\setbox\TheTitle=\vbox{\baselineskip=20pt\let\par=\cr\obeylines%
    \halign{\centerline{\caps##} #3\cr}}\copy\TheTitle\TSkip\TSkip\fi%
\centerline{\caps #4}\TSkip\TSkip%
\def\next{#5}\ifx\next\empty\gdef\SAuthor{#4}\else\gdef\SAuthor{#5}\fi%
\ifx\received\empty\relax
    \else\centerline{\eightrm Received: \received}\fi%
\ifx\revised\empty\TSkip%
    \else\centerline{\eightrm Revised: \revised}\TSkip\fi%
\ifx\communicated\empty\relax
    \else\centerline{\eightrm Communicated by \communicated}\fi\TSkip\TSkip%
\catcode'015=5}}\def\Title{\obeylines\GetTitle}
\def\Abstract{\begingroup\narrower
    \parskip=\medskipamount\parindent=0pt{\caps Abstract. }}
\def\EndAbstract{\par\endgroup\TSkip}
\long\def\MSC#1\EndMSC{\def\arg{#1}\ifx\arg\empty\relax\else
     {\par\narrower\noindent%
     2000 Mathematics Subject Classification: #1\par}\fi}
\long\def\KEY#1\EndKEY{\def\arg{#1}\ifx\arg\empty\relax\else
	{\par\narrower\noindent Keywords and Phrases: #1\par}\fi\TSkip}
\newbox\TheAdd\def\Addresses{\vfill\copy\TheAdd\vfill
    \ifodd\number\lastpage\vfill\eject\phantom{.}\vfill\eject\fi}
{\obeylines\gdef\GetAddress #1
\Address #2 
\Address #3
\Address #4
\EndAddress
{\def\xs{4.3truecm}\parindent=0pt
\setbox0=\vtop{{\obeylines\hsize=\xs#1\par}}\def\next{#2}
\ifx\next\empty 
     \setbox\TheAdd=\hbox to\hsize{\hfill\copy0\hfill}
\else\setbox1=\vtop{{\obeylines\hsize=\xs#2\par}}\def\next{#3}
\ifx\next\empty 
     \setbox\TheAdd=\hbox to\hsize{\hfill\copy0\hfill\copy1\hfill}
\else\setbox2=\vtop{{\obeylines\hsize=\xs#3\par}}\def\next{#4}
\ifx\next\empty\ 
     \setbox\TheAdd=\vtop{\hbox to\hsize{\hfill\copy0\hfill\copy1\hfill}
                \vskip20pt\hbox to\hsize{\hfill\copy2\hfill}}
\else\setbox3=\vtop{{\obeylines\hsize=\xs#4\par}}
     \setbox\TheAdd=\vtop{\hbox to\hsize{\hfill\copy0\hfill\copy1\hfill}
	        \vskip20pt\hbox to\hsize{\hfill\copy2\hfill\copy3\hfill}}
\fi\fi\fi\catcode'015=5}}\gdef\Address{\obeylines\GetAddress}
\DeclareMathOperator{\GL}{GL}
\begin{document}
\Title
Hermitian lattices and bounds in $K$-theory 
of algebraic integers
\ShortTitle
Hermitian lattices and bounds in $K$-theory 
\SubTitle   
\Author 
Eva Bayer-Fluckiger\footnote{First and third authors partially supported by SNSF, Project number 200020-121647}, Vincent Emery\footnote{Second author supported by SNSF, Project number PZ00P2-148100} and Julien Houriet\footnotemark[1]
\ShortAuthor 
E. Bayer-Fluckiger, V. Emery and J. Houriet
\EndTitle
\Abstract 
  Elaborating on a method of Soul\'e, and using better estimates for the
  geometry of Hermitian lattices, we improve the upper bounds for the torsion
  part of the $K$-theory of the rings of integers of  number fields.
\EndAbstract
\MSC 
11R70 (primary); 11E39 (secondary)
\EndMSC
\KEY 
$K$-theory of number fields, Hermitian lattices,
\EndKEY
\Address 
SB--MATHGEOM--CSAG
EPF Lausanne
B\^atiment MA, Station 8
CH-1015 Lausanne
Switzerland
\Address
eva.bayer@epfl.ch
vincent.emery@math.ch
julien.houriet@gmail.com
\Address
\Address
\EndAddress

\newtheorem{theorem}{Theorem}[section]
\newtheorem{lemma}[theorem]{Lemma}
\newtheorem{prop}[theorem]{Proposition}
\newtheorem{cor}[theorem]{Corollary}
\newtheorem{conjecture}[theorem]{Conjecture}
\theoremstyle{definition}
\newtheorem{definition}[theorem]{Definition}
\theoremstyle{remark}
\newtheorem{rmk}{Remark}
\newtheorem{example}[theorem]{Example}

\numberwithin{equation}{section}

\newcommand{\tors}{\mathrm{tors}}
\newcommand{\free}{\mathrm{free}}
\newcommand{\im}{\mathrm{im}}

\newcommand{\N}{\mathbb N}
\newcommand{\Z}{\mathbb Z}
\newcommand{\Q}{\mathbb Q}
\newcommand{\R}{\mathbb R}
\newcommand{\C}{\mathbb C}
\newcommand{\A}{\mathbb A}
\newcommand{\f}{\mathfrak f}

\renewcommand{\O}{\mathcal O}
\newcommand{\D}{D}
\newcommand{\q}{\mathfrak q}
\newcommand{\p}{\mathfrak p}


\newcommand{\Af}{\A_\mathrm{f}}

\newcommand{\V}{V}
\newcommand{\Vf}{\V_{\mathrm{f}}}
\newcommand{\Sf}{S_{\mathrm{f}}}
\newcommand{\Vi}{\V_\infty}

\newcommand{\tA}{\mathrm{A}}
\newcommand{\tB}{\mathrm{B}}
\newcommand{\tC}{\mathrm{C}}
\newcommand{\tD}{\mathrm{D}}
\newcommand{\tE}{\mathrm{E}}
\newcommand{\tF}{\mathrm{F_4}}
\newcommand{\tG}{\mathrm{G_2}}

\newcommand{\Hy}{\mathbf H}

\newcommand{\G}{\mathrm G}
\newcommand{\bG}{\overline{\G}}
\newcommand{\CG}{\mathrm C}

\newcommand{\GG}{\mathbf G}

\newcommand{\CC}{\mathcal C}

\newcommand{\bs}{\backslash}
\newcommand{\del}{\partial}

\newcommand{\W}{\widetilde{W}}

\newcommand{\card}{\mathrm{card}}
\newcommand{\Tr}{\mathrm{Tr}}
\newcommand{\No}{\mathcal{N}}

\renewcommand{\a}{\mathfrak{a}}

\newcommand{\norm}[1]{|\!|#1|\!|}







\section{Introduction}
\label{sec:intro}

Let $F$ be a number field of degree $d$, with ring of integers $\O_F$ and discriminant
$D_F$. We denote by $K_n(\O_F)$ 
the $n$-th $K$-theory group of $\O_F$, which was defined by Quillen and showed by him 
to be finitely generated. The rank of $K_n(\O_F)$ has been computed by Borel in
\cite{Bor77}. 
In this article we consider the problem of finding  
an upper bound  -- in terms of $n$, $d$ and $D_F$ -- for the order of the torsion part
$K_n(\O_F)_\tors$. Such general bounds have been obtained by Soul\'e in \cite{Soule03}. 
Our Theorem~\ref{thm:Kn} below sharpens Soul\'e's results.

As in Soul\'e's paper, our inequalities hold ``up to small torsion''. To state this precisely, for a
finite abelian group $A$ let us write $\card_\ell(A)$  for the order of $A/B$, where $B \subset
A$ is the subgroup generated by elements of order $\le \ell$.

\begin{theorem}
  \label{thm:Kn}
  Let $F \neq \Q$ be a number field of degree $d$ and with discriminant
  $D_F$. Then for any $n \ge 2$ we have
  \begin{align*}
          \log \, \card_\ell\, K_n(\O_F)_\tors &\le (2n+1)^{71 n^4 d^3} \cdot
          d^{293 n^5 d^5} \cdot |D_F|^{528 n^5 d^4} \,,
  \end{align*}
  where $\ell = \max(d+1, 2n+2)$.
\end{theorem}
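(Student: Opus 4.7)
The plan is to follow Soulé's strategy from \cite{Soule03}, which reduces bounding $K_n(\O_F)_\tors$ to bounding the integral homology of an arithmetic group $\Gamma_N = \SL_N(\O_F)$ with $N$ of order $n$. First I would invoke a theorem of Arlettaz on the Hurewicz map, combined with the known homological stability for $\SL_N$, to show that up to torsion annihilated by $\ell$ the group $K_n(\O_F)_\tors$ embeds into $H_n(\Gamma_N,\Z)$. This reduces the problem to producing an explicit upper bound for $\card_\ell H_n(\Gamma_N,\Z)$.

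To control this homology, I would construct a small CW-model for the quotient $\Gamma_N \bs \bar{X}$ of the Borel--Serre compactification of the symmetric space $X$ associated with $\Gamma_N$. This quotient is a compact orbifold carrying the same rational cohomology as $B\Gamma_N$. A fundamental domain is cut out by reduction theory for Hermitian $\O_F$-lattices of rank $N$, and one cellulates it via a Voronoi or well-rounded decomposition on the space of such lattices. The number $C$ of cells in each dimension, and the maximum absolute value $M$ of the coefficients of the integer boundary matrices, can both be estimated explicitly in terms of $N$, $d$ and $|D_F|$ by counting $\O_F$-lattices of bounded invariants inside a Siegel set. A Smith normal form plus Hadamard-type argument then yields $\log \card_\ell H_n(\Gamma_N,\Z)_\tors \leq C(\log C + \log M) \cdot \mathrm{const}$, which is the required form.

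The crucial new input, compared to \cite{Soule03}, is a sharpened geometry-of-Hermitian-lattices estimate: tighter control on the covering radius and successive minima of integral Hermitian $\O_F$-lattices as functions of $d$ and $|D_F|$, which shrinks both $C$ and $M$ at every stage. The main obstacle is the careful bookkeeping of how these sharpened estimates propagate through the chain of reductions: one must choose $N$ linear in $n$, large enough for stability yet small enough for the lattice-counting bounds to survive, and then track how the Minkowski-type inequalities for Hermitian lattices, the diameter of a Siegel set, and the cell-count in the well-rounded decomposition combine to produce the explicit exponents $71\, n^4 d^3$, $293\, n^5 d^5$ and $528\, n^5 d^4$ appearing in the statement, without loss of the improvement over Soulé's original estimates.
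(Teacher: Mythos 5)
Your overall framework is indeed the one used for this theorem (Hurewicz map plus homological stability, a compact cell model for the arithmetic quotient, and a Gabber/Smith-normal-form type bound on torsion in terms of cell counts and incidence data), but the proposal stops exactly where the theorem actually lives. All of that framework is already in Soul\'e's paper and by itself yields only a bound that is exponential in $|D_F|^{1/2}$; the entire content of the exponents $71n^4d^3$, $293n^5d^5$, $528n^5d^4$ is the construction of an explicit finite set $\Phi\subset L$ (equivalently, explicit cell-count and incidence bounds) that is \emph{polynomial} in $|D_F|$. You assert ``tighter control on the covering radius and successive minima of integral Hermitian $\O_F$-lattices'' and defer the rest to ``careful bookkeeping'', but you never state the sharpened estimate nor the mechanism by which it enters, and this is not a gap one can leave open: it is precisely where the improvement is (or is not) achieved. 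Concretely, the needed chain is: the covering-radius bound $R=\frac{\sqrt d}{2}|D_F|^{1/d}\No(I)^{1/d}$ for rank-one ideal lattices $(I,q_0)$ (Proposition~\ref{prop:eva}), fed into Soul\'e's induction to produce in every well-rounded $(L,h)$ a basis of length at most $B=\frac{4N^2}{2^N}d^{\,5Nd^2}|D_F|^{6N(d+1)}$ (Proposition~\ref{prop:bounded-basis}), then a Hadamard--Icaza argument bounding the $\O_F$-coordinates of the minimal vectors (Proposition~\ref{prop:borne-coeff-T}), whence $\card(\Phi)\le N^{3N^2d^2}d^{5N^3d^4}|D_F|^{9N^3d^3}$ (Corollary~\ref{cor:Phi}) and the theorem by substitution into \eqref{eq:bound-Kn-Phi} with $N=2n+1$. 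Without some such explicit construction your argument proves nothing beyond Soul\'e's original bound.

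There are also secondary inaccuracies in the reduction itself. First, the reduction should go through $\GL_N(\O_F)$, not $\SL_N(\O_F)$: $K_n(\O_F)=\pi_n(B\GL(\O_F)^+)$, and the van der Kallen--Maazen stability invoked here is for $\GL_N$ with $N=2n+1$; passing to $\SL_N$ would require an additional comparison you do not supply. Second, the compact model used is Ash's well-rounded retract $\W\subset X$, not the Borel--Serre compactification; for the latter you would still need an explicit $\Gamma$-equivariant cellulation with controlled cell counts, which is exactly what the well-rounded retract provides, so mixing the two blurs the key point. Third, since $\Gamma$ has torsion, the cellular chain complex of $\Gamma\bs\W$ does not compute $H_*(\Gamma;\Z)$, and ``same rational cohomology as $B\Gamma_N$'' is useless when the goal is a torsion bound: one needs the orientation-preserving subcomplex $\CC^+_\bullet$ and Brown's spectral sequence to get agreement up to primes at most $1+\max(d,N)$ (Corollary~\ref{cor:H-Gamma-as-H-C}), which is the reason the statement is only about $\card_\ell$ with $\ell=\max(d+1,2n+2)$.
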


To improve the readability, we have not tried to state here the best possible bounds that
one could get with the method we use. We refer to the PhD thesis of the third author
\cite[Theorem~4.3]{Houriet}
-- in which the result was originally obtained -- for slightly better estimates. However, it does
not change the fact that the upper bounds are huge, and -- although explicit --  
certainly unusable for practical computation. We shall insist here on
the qualitative aspect of our result, which could be stated as follows.

\begin{cor}
        \label{cor:Kn}
        There exist $\alpha$ and  $\beta$, both polynomials in $n$ and $d$,
        such that for any number field $F$ of degree $d \ge 2$ we have  
        \begin{align*}
                \log \, \card_\ell \, K_n(\O_F)_\tors &\le  (n d)^\alpha|D_F|^\beta, 
        \end{align*}
        where $\ell = \max(d+1, 2n+2)$ and $n \ge 2$.
\end{cor}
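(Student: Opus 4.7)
The corollary is a qualitative repackaging of Theorem~\ref{thm:Kn}, so the plan is to apply that theorem directly and then massage its three explicit factors into the required form $(nd)^\alpha |D_F|^\beta$. First, I would set $\beta(n,d) = 528\, n^5 d^4$, which already absorbs the $|D_F|$-dependence of Theorem~\ref{thm:Kn} and is manifestly polynomial in $n$ and $d$. All of the remaining work is to show that the remaining two factors can be bounded by $(nd)^{\alpha(n,d)}$ for some polynomial $\alpha$.

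Passing to logarithms, this amounts to checking
\begin{equation*}
  71\, n^4 d^3 \log(2n+1) + 293\, n^5 d^5 \log d \le \alpha(n,d)\, \log(nd).
\end{equation*}
The hypothesis $d \ge 2$ together with $n \ge 2$ gives $nd \ge 4$, and in this range the elementary inequalities $\log d \le \log(nd)$ and $2n+1 \le (nd)^2$ both hold; the latter yields $\log(2n+1) \le 2\log(nd)$. Substituting into the displayed bound shows that
\begin{equation*}
  \alpha(n,d) = 142\, n^4 d^3 + 293\, n^5 d^5
\end{equation*}
suffices. Multiplying back by $|D_F|^{\beta(n,d)}$ and invoking Theorem~\ref{thm:Kn} then gives the desired estimate $\log\, \card_\ell\, K_n(\O_F)_\tors \le (nd)^\alpha |D_F|^\beta$.

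There is no real obstacle: the entire content of the statement is contained in Theorem~\ref{thm:Kn}, and the corollary follows by transparent bookkeeping. The only mild point of care is that the comparison $\log(2n+1)/\log(nd) = O(1)$ degenerates when $d = 1$, which is precisely why the corollary is stated with the hypothesis $d \ge 2$—matching the $F \neq \Q$ assumption already present in Theorem~\ref{thm:Kn}.
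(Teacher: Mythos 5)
Your proposal is correct and matches the paper's (implicit) argument: the corollary is stated as a direct qualitative consequence of Theorem~\ref{thm:Kn}, obtained by exactly the sort of bookkeeping you carry out, absorbing the factors $(2n+1)^{71n^4d^3}$ and $d^{293n^5d^5}$ into $(nd)^{\alpha}$ and taking $\beta = 528\,n^5d^4$. Your explicit choice of $\alpha$ and the checks $2n+1 \le (nd)^2$, $\log d \le \log(nd)$ for $n, d \ge 2$ are valid, so nothing is missing.
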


Compared to \cite{Soule03}, our result improves the bound by an exponential factor:  
the previous bound for $\log \card_\ell K_n(\O_F)$ was at least $\exp(\alpha
|D_F|^{1/ 2})$, for some polynomial $\alpha = \alpha(n,d)$ (see Proposition 4 in loc.\
cit.\ for the precise statement). 

The strategy is the following.
The group $K_n(\O_F)$ can be related -- via the Hurewicz map -- to the integral homology
$H_n(\GL(\O_F))$, and an upper bound (up to small torsion) for the order of $K_n(\O_F)$ can
then be obtained through the study of the integral homology of $\GL_N(\O_F)$, with $N=2n+1$ (cf.
Section~\ref{bound-homology-K-th}).
The proof of Theorem~\ref{thm:Kn} follows the method of Soul\'e, which uses Ash's
well-rounded retract (cf. Section~\ref{sec:hermitian-well-rounded}) to study these homology
groups. This reduces the problem to finding good estimates concerning the geometry of
hermitian lattices.
Our approach to these estimates differs from that of Soul\'e 
(cf. Section~\ref{sec:hermit-lattice-bounded-bases}), leading to the improved bounds
in Theorem~\ref{thm:Kn}.

For $F = \Q$ our method does not bring any improvement, so that
\cite[Prop.~4~iv)]{Soule03} is still the best available general bound for $K_n(\Z)$.
We refer to \cite[Theorem~1.3]{EmeK2} for a different approach to the
same problem for $K_2$, which gives better result than Corollary~\ref{cor:Kn} 
in the case of totally imaginary fields. Note that all
these results remain very far from the general bound conjectured by
Soul\'e in \cite[Sect.~5.1]{Soule03}, which should take the following form for
some constant $C(n, d)$: 
\begin{align}
  \log \, \card\, K_n(\O_F)_\tors &\le C(n, d) \, \log |D_F|. 
  \label{eq:conject-Soule}
\end{align}

\subsection*{Acknowledgement} We thank Christophe Soul\'e for suggesting
that an improvement of his bound in \cite{Soule03} could be possible with the methods
of the paper of the first named author~\cite{Bayer06}.

\section{Hermitian metrics and the well-rounded retract}
\label{sec:hermitian-well-rounded}

\subsection{Notation}


We keep the notation of the introduction. Let us denote 
by $(r_1, r_2)$ the signature of the number field $F$.
Let us  write $F_\R = \R \otimes_\Q F$. If $\Sigma$ denotes the set of field embeddings 
$\sigma: F \to \C$, then $F_\R$ can be identified with the subspace $(\C^\Sigma)^+ \subset
\C^\Sigma$ invariant under the involution $(x_\sigma) \mapsto
(\overline{x}_{\overline{\sigma}})$, where $\overline{a}$ denotes the complex
conjugation. This also provides an isomorphism $F_\R \cong \R^{r_1} \times \C^{r_2}$.

For $x \in F_\R$, written as  $x = (x_\sigma)_{\sigma \in \Sigma}$, we denote by 
$\overline{x} = (\overline{x_\sigma})$
the result of the complex conjugation applied component-wise. 
We denote by $\Tr$ the trace  map from $F_\R$ to $\R$,
defined by $\Tr(x) = \sum_{\sigma \in \Sigma} x_\sigma$.
We will also use the absolute value of the norm map:  $\No(x) =
\prod_{\sigma \in \Sigma} |x_\sigma|$.

We fix a free $\O_F$-lattice $L$ of finite rank $N \ge 1$.
Let $V = F \otimes_{\O_F} L$ and $V_\R = \R \otimes_\Q V$, so that $V_\R$ 
can be seen as a (left) $F_\R$-module.  
Let $\Gamma$ be the group $\GL(L)$ of automorphism of $L$. By fixing a basis of $L$, we
have the identification $\Gamma = \GL_N(\O_F)$. Then $\Gamma$ is a discrete subgroup of
the reductive Lie group $\GL(V_\R) = \GL_N(F_\R)$. We shall denote the latter by $G$, and
we will let it act on $V_\R$ on the left (and similarly for $\Gamma$ on $L$).




\subsection{Hermitian metrics}

Let $h : V_\R \times V_\R \to F_\R$ be a hermitian form on $V_\R$, that is, $h =
(h_\sigma)_{\sigma \in \Sigma}$ is $F_\R$-linear
in the first variable and $h(y,x) = \overline{h(x,y)}$. 
The pair $(L, h)$ is called a \emph{hermitian lattice}.
When $x = y$, we also write $h(x)
= h(x,x)$. Note that $h(x)$ has only real components, and  we say that $h$ is positive 
definite if $h_\sigma(x,x) >0 $ for any nonzero $x \in V_\R$ and all $\sigma \in \Sigma$.

Let $X$ be the (topological) space of positive definite hermitian forms on $V_\R$. The group
$G = \GL(V_\R)$ acts transitively on $X$ in the following way: the
element $\gamma \in G$ maps the form
$h \in X$ to
\begin{align}
        (\gamma \cdot h)(x,y) = h(\gamma^{-1}x, \gamma^{-1}y).
        \label{eq:action-G-X}
\end{align}
The space $X$ can be identified with the set of
positive definite symmetric $N\times N$ matrices with coefficients in $F_\R$. Using this
identification, it is not difficult to see that $X$ is contractible.

To each $h \in X$ we associate the real quadratic form $q_h$ on $V_\R$ (seen as a real vector
space) defined  for $x \in V_\R$ by:
\begin{align}
        q_h(x) &= \Tr(h(x)).
        \label{eq:quadr-form}
\end{align}
Such a quadratic form $q_h: V_\R \to \R$ for $h \in X$ will be called a \emph{hermitian
metric}. Given $h$, we will denote by $\norm{\cdot}_h$ the norm on $V_\R$
induced by $q_h$. 

\subsection{Ash's well-rounded retract}
\label{sec:Ash-retract}

For $h \in X$ we set $m(L,h) \in \R_{>0}$ to be the minimum of $q_h(x)$ over
the nonzero $x \in L \subset V$, and define 
\begin{align}
        M(L,h) &= \left\{ x \in L \;|\; q_h(x) = m(L, h) \right\}.
        \label{eq:Mh}
\end{align}

\begin{definition}
        We say that $h \in X$ (or $(L,h)$) is \emph{well rounded} if $m(L,h) = 1$
        and $M(L,h)$ generates $V$ (as a vector space over $F$). 
\end{definition}

Let $\W \subset X$ be the subspace of well-rounded hermitian forms. Note that the action
defined by \eqref{eq:action-G-X} restricts to an action of $\Gamma = \GL_N(\O_F)$ on $\W$.
In \cite[p. 466--467]{Ash}, Ash defined a CW-complex structure on $\W$ that has the following properties:
two points $h$ and $h'$ of $\W$ belong to the interior of the same cell $C(h) = C(h')$
if and only if $M(L,h) = M(L,h')$, and $C(h') \subset C(h)$ if and only if $M(L,h')
\supset M(L,h)$. Moreover, we have that $M(L,\gamma\cdot h) = \gamma M(L,h)$.
Then the action of $\Gamma$ on $\W$ is compatible with the cell structure: an element
$\gamma \in \Gamma$ maps the cell $C(h)$ to $C(\gamma \cdot h)$.
\begin{theorem}[Ash]
       $\W$ is a deformation retract of $X$ on which $\Gamma$ acts with finite
       stabilizer $\Gamma_\sigma$ for each cell $\sigma$ of $\W$. 
       The quotient $\Gamma\bs \W$ is compact, of dimension $\dim(X)-N$.
        \label{thm:Ash}
\end{theorem}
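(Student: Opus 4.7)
The plan is to construct an explicit deformation retraction $r : X \times [0,1] \to X$ with $r_0 = \mathrm{id}_X$ and $r_1(X) \subset \W$, then derive the remaining assertions from the finiteness of $M(L,h)$ together with reduction theory for $\GL_N(\O_F)$.

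For the retraction, I would follow Ash's original strategy, using a gradient-like flow associated to the minimum function $h \mapsto m(L,h)$. Given $h \in X$ with $W := \langle M(L,h) \rangle_F \subsetneq V$, the natural direction of deformation is to contract $h$ on the $h$-orthogonal complement $W^{\perp_h} \subset V_\R$ while preserving $h$ on $W_\R$; along the resulting flow $h_t$ the values $q_{h_t}(v)$ for $v \in L \setminus W$ strictly decrease, so after a finite positive time some new lattice vector $v_1$ reaches the minimum value and joins $M(L, h_t)$, enlarging the span by at least one $F$-dimension. Iterating at most $N$ times and then rescaling to impose $m = 1$ produces a well-rounded point. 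The delicate point is continuity in the initial datum $h$: since the prescribed direction depends on the current $M(L,\cdot)$, one must glue the piecewise-defined flow along the locally closed strata $\{h \mid M(L,h) = S\}$, using that a jump in $M(L,\cdot)$ at a limit point only enlarges $M(L,h)$ and hence only enlarges the span, so the remaining flow time depends upper semi-continuously on $h$.

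The cell structure is essentially read off from the definition: for each finite spanning subset $S \subset L$, the locus $C(S) = \{h \in \W \mid M(L,h) = S\}$ is cut out of $X$ by the linear equations $q_h(v) = 1$ for $v \in S$ together with the open inequalities $q_h(w) > 1$ for $w \in L \setminus S$, and is therefore a relatively open convex cell; the closure relations and the $\Gamma$-equivariance follow directly, the latter from the formula $M(L, \gamma \cdot h) = \gamma M(L,h)$ implicit in \eqref{eq:action-G-X}. For finite stabilizers, any $\gamma \in \Gamma_\sigma$ permutes the finite set $M(L,h)$; since $M(L,h)$ spans $V$ over $F$, the restriction map $\Gamma_\sigma \to \mathrm{Sym}(M(L,h))$ is injective, so $\Gamma_\sigma$ is finite.

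The main obstacle is the compactness of $\Gamma \bs \W$, for which the key input is a Mahler-type compactness statement adapted to hermitian lattices over $\O_F$. On the normalized slice $\{h \in X \mid \det h = 1\}$ the well-rounded condition $m(L,h) = 1$ bounds the successive minima of $h$ away from $0$ and, together with the determinant normalization, also from above; reduction theory for $\GL_N(\O_F)$ -- invoking the finiteness of the class number of $F$ and Dirichlet's unit theorem to handle the noncompact center of $G$ and the unit action -- then exhibits a compact fundamental set, yielding compactness of $\Gamma \bs \W$. Finally, the dimension assertion comes from examining the top-dimensional cells: these correspond to minimal spanning configurations, namely $|S| = N$ vectors forming an $F$-basis of $V$, for which the $N$ equations $q_h(v) = 1$ are linearly independent linear forms on $X$ and therefore carve out a cell of codimension exactly $N$, so $\dim(\Gamma \bs \W) = \dim \W = \dim X - N$.
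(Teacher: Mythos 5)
The paper does not actually reprove this statement: its proof consists of pointers to Ash's paper (\S 3~(i) for the retraction, \S 3~(ii) for compactness, p.~466 for the dimension), so your reconstruction follows the same route that the paper delegates to its reference. Within your outline, the finite-stabilizer argument is correct and complete ($\gamma$ in a cell stabilizer permutes the finite set $M(L,h)$, and since $M(L,h)$ spans $V$ over $F$ the map $\Gamma_\sigma \to \mathrm{Sym}(M(L,h))$ is injective), and the dimension count is essentially right: the forms $h \mapsto q_h(v_i)$ for $N$ many $F$-independent minimal vectors are linearly independent (test against forms diagonal in that basis), so every cell has codimension at least $N$ and the top ones exactly $N$. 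One small correction: $M(L,h)$ is stable under $-1$ (and the roots of unity of $F$), so a cell never literally has $|S|=N$; since $\pm v$ impose the same equation the conclusion $\dim = \dim X - N$ survives, but your description of top cells should be phrased up to units.

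The genuine gaps are in the two analytic steps, which are exactly where the content of Ash's theorem lies. (a) Retraction: you assert that along the flow ``after a finite positive time some new lattice vector reaches the minimum''; this needs proof. One must exclude that every $v \in L \setminus W$ has $q_h(\pi_W v) \ge m(L,h)$, where $\pi_W$ is the orthogonal projection onto $W_\R$; this bad case is impossible (it would force $\pi_W(L)$ to be discrete of rank $\le \dim_\R W_\R$ and $\pi_W$ to be injective on $L$, contradicting $\mathrm{rank}_\Z L = \dim_\R V_\R$), but some such argument must be supplied. More seriously, continuity of the glued piecewise flow jointly in $(h,t)$ is the heart of Ash's \S 3~(i); ``the remaining flow time depends upper semi-continuously on $h$'' is not a proof of continuity of the retraction (nor, as stated, obviously the right statement), and without it the deformation-retract claim is unproved. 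The assembling of the strata $C(S)$ into a genuine CW structure (local finiteness, closure relations) is likewise only asserted. (b) Compactness: your normalization is inconsistent, since on $\W$ one has $m(L,h)=1$ and cannot simultaneously impose $\det h = 1$. The standard fix is to note that well-roundedness with $m=1$ bounds the determinant above (Hadamard applied to $N$ independent minimal vectors, or rather to the induced $\Z$-lattice of rank $Nd$) and below (Hermite--Minkowski), and then invoke Mahler compactness/reduction theory over $F$; that is presumably what you intend, but as written the slice you work on does not meet $\W$.
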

\begin{proof}
The proof of this statement follows from the argument of Ash given
in the proof of the main theorem of \cite{Ash}, page 462. 
More precisely, the argument to prove
that $\W$ is a deformation retract of $X$ is the same as Ash uses for 
$W = \W/\Gamma$, in \cite[\S 3 (i)]{Ash}.
The compactness of $\Gamma\bs\W$ is proved in \S 3 (ii) of loc.\ cit., 
and the dimension is computed on page 466. 
\end{proof}

Let $\CC_\bullet$ be the complex  of cellular chains on
$\Gamma\bs\W$. We can decompose it as $\CC_\bullet = \CC_\bullet^+
\cup \CC_\bullet^-$, where $\Gamma$ preserves (resp. does not preserve) the
orientation of each $\sigma \in \CC_\bullet^+$ (resp. $\sigma \in \CC_\bullet^-$). 
It then follows from the spectral sequence described in \cite[VII~(7.10)]{Brown82}
that up to prime divisors of the finite stabilizers
$\Gamma_\sigma$, the homology of $\CC_\bullet^+$ computes $H_\bullet(\Gamma)$.
In particular, one has the following (cf. \cite[Lemma~9]{Soule03}).
See Section~\ref{sec:intro} for the definition of $\card_\ell$. 

\begin{cor}
  \label{cor:H-Gamma-as-H-C}
  Let $\ell = 1 + \max(d,N)$.   Then  for any $n$ we have:
  \begin{align*}
      \card_\ell\, H_n(\Gamma)_\tors  = \card_\ell\,
      H_n(\CC_\bullet^+)_\tors,
  \end{align*}
  where $H_n(\cdot)_\tors$ denotes the torsion part of the integral
  homology.
\end{cor}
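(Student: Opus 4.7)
The plan is to derive the corollary from Brown's equivariant homology spectral sequence applied to the action of $\Gamma$ on $\W$. Theorem~\ref{thm:Ash} gives exactly the hypotheses needed: $\W$ is a contractible CW-complex (as a deformation retract of the contractible space $X$) on which $\Gamma$ acts cellularly with finite stabilizers $\Gamma_\sigma$. The construction of \cite[VII~(7.10)]{Brown82} then yields a first-quadrant spectral sequence
\[
E^1_{p,q} \;=\; \bigoplus_{\sigma} H_q(\Gamma_\sigma;\, \Z_\sigma) \;\Longrightarrow\; H_{p+q}(\Gamma;\Z),
\]
where the sum runs over a set of $\Gamma$-orbit representatives of $p$-cells of $\W$, and $\Z_\sigma$ is the orientation module of $\sigma$ (trivial when $\Gamma_\sigma$ preserves the orientation of $\sigma$, the sign character otherwise).

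Next I would analyze the $E^1$-page up to $\ell$-small torsion. Along the bottom row $q=0$, each orbit $\sigma \in \CC_p^+$ contributes $H_0(\Gamma_\sigma;\Z) = \Z$, whereas each $\sigma \in \CC_p^-$ contributes the coinvariants of the sign representation, namely $\Z/2\Z$; the restriction of $d^1$ to the $\CC_\bullet^+$-columns coincides with the cellular boundary of $\CC_\bullet^+$. For $q > 0$ the group $H_q(\Gamma_\sigma;\Z_\sigma)$ is finite and annihilated by $|\Gamma_\sigma|$. The key quantitative input is then a Minkowski-type bound on the stabilizers: every prime dividing $|\Gamma_\sigma|$, for $\Gamma_\sigma$ a finite subgroup of $\GL_N(\O_F)$, is at most $\max(d,N)+1 = \ell$. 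Since in particular $2 \le \ell$, applying $\card_\ell$ also annihilates the $\Z/2\Z$-contributions from $\CC_\bullet^-$ in the bottom row.

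It follows that, modulo $\ell$-small torsion, the spectral sequence collapses to the $\CC_\bullet^+$-part of its bottom row, whose associated homology is $H_\bullet(\CC_\bullet^+)$. A standard bookkeeping argument (the order of a finitely generated abelian group is multiplicative on filtrations whose successive subquotients are $\ell$-small torsion) then transfers this degeneration to the claimed equality $\card_\ell H_n(\Gamma)_\tors = \card_\ell H_n(\CC_\bullet^+)_\tors$, as already recorded in \cite[Lemma~9]{Soule03}. The main obstacle here is establishing the Minkowski-type bound on $|\Gamma_\sigma|$ in precisely the shape $p \le \max(d,N)+1$; once this is in hand, the spectral-sequence formalism and the identification of the $d^1$-differential with the cellular boundary are routine.
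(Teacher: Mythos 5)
Your route is the same as the paper's: the corollary is exactly Soul\'e's Lemma~9, and both the paper and you derive it from Brown's equivariant spectral sequence \cite[VII~(7.10)]{Brown82} for the $\Gamma$-action on $\W$, with the bottom row giving $\CC_\bullet^+$ (plus $\Z/2$'s from orientation-reversing cells) and the higher rows giving stabilizer homology; the identification of $d^1$ with the cellular boundary is fine. The trouble is the step you yourself flag as the key input: it is \emph{false} that every prime dividing the order of a finite subgroup of $\GL_N(\O_F)$ is at most $\max(d,N)+1$. An element of prime order $p$ exists in $\GL_N(F)$ as soon as $[F(\zeta_p):F]\le N$, and this degree can be as small as $(p-1)/d$; the correct Minkowski-type bound is therefore roughly $p-1\le Nd$, not $p\le\max(d,N)+1$. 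Concretely, take $F=\Q(\sqrt{5})$ (so $d=2$) and $N=2$: the companion matrix of $x^2+\tfrac{1+\sqrt{5}}{2}x+1$, a factor of $\Phi_5$ over $\O_F$, lies in $\GL_2(\O_F)$ and has order $5>3=\max(d,N)+1$, and it sits inside some cell stabilizer. So the asserted annihilation of the $E^1$-terms with $q>0$ by $\card_\ell$ does not follow from your lemma, and with the true bound you would only reach a statement with a much larger $\ell$.

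There is a second, independent, weak point: even when all primes dividing $|\Gamma_\sigma|$ are $\le\ell$, the inference ``$H_q(\Gamma_\sigma;\Z_\sigma)$ is annihilated by $|\Gamma_\sigma|$, hence killed by $\card_\ell$'' is not valid for the paper's definition of $\card_\ell$ (quotient by the subgroup generated by elements of order $\le\ell$): for $\Z/4$ and $\ell=3$ the only prime involved is $2\le\ell$, yet $\card_3(\Z/4)=2$. For the same reason the concluding ``standard bookkeeping'' is not routine, since $\card_\ell$ is not multiplicative on extensions ($0\to\Z/2\to\Z/4\to\Z/2\to 0$ again), so collapsing the filtration of $H_n(\Gamma)$ needs the more careful formulation and argument that Soul\'e actually gives. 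The paper sidesteps both issues by quoting \cite[Lemma~9]{Soule03} with exactly this $\ell$; your reconstruction replaces that citation by a quantitative lemma that is false as stated, so the proposal has a genuine gap precisely at its load-bearing step.
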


\section{Bounding torsion homology and $K$-theory}
\label{bound-homology-K-th}

\subsection{The Hurewicz map}
\label{sec:Hurewicz-map}

For any $n>1$ we
consider the $n$-th Quillen $K$-group $K_n(\O_F) = \pi_n(B \GL(\O_F)^{+})$
(``plus construction'').  
The Hurewicz  map relating homotopy groups to homology provides 
a map $K_n(\O_F) \to H_n(\GL(\O_F)^+) = H_n(\GL(\O_F))$.
We know (see for instance \cite[Theorem~1.5]{Arl91}) that its kernel does not contain
elements of order $p$ for $p > \frac{n+1}{2}$. Moreover, 
by a stability result of van der Kallen and Maazen (cf. \cite[Theorem~4.11]{vdK80})
the homology of
$\GL(\O_F) = \varinjlim \GL_N(\O_F)$ is equal to the homology of
$\GL_N(\O_F)$ for any $N \ge 2n + 1$.  
Let then $N = 2n+1$, and consider $\Gamma = \GL_N(\O_F)$. We deduce from
Corollary~\ref{cor:H-Gamma-as-H-C} that for $\ell = \max(d+1, 2n+2)$ we
have:

\begin{align}
  \card_\ell \, K_n(\O_F)_\tors &\le \card_\ell \,
  H_n(\CC^+_\bullet)_\tors. 
  \label{eq:bound-Kn-H-C}
\end{align}

\subsection{Gabber's lemma}
\label{sec:Gabber-lemma}

The abstract result that allows to obtain a bound for the right hand side of 
\eqref{eq:bound-Kn-H-C} is the following lemma.  It was discovered by Gabber,
and first appeared in Soul\'e \cite[Lemma~1]{Soule99}. 
See Sauer \cite[Lemma~3.2]{Sauer} for a more elementary proof. 

\begin{lemma}[Gabber]
 Let  $A = \Z^a$ with the standard basis $(e_i)_{i=1,\dots,a}$
 and $B = \Z^b$, so that $B \otimes \R$ is equipped with the standard
 Euclidean norm $\|\cdot\|$.  Let $\phi: A \to B$ be a $\Z$-linear map
 and let $\alpha \in \R$
 such that  $\| \phi(e_i) \| \le \alpha$ for each $i = 1, \dots, a$. 
 If we denote by $Q$ the cokernel of $\phi$, then 
 \begin{align*}
   |Q_\tors| &\le  \alpha^{\min(a,b)}\;.
 \end{align*}
       \label{lemma:Gabber}
\end{lemma}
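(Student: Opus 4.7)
The plan is to realize $|Q_\tors|$ as the index of one lattice inside another, and then to bound that index by a ratio of covolumes. Let $M = \phi(A) \subseteq B$ be the image, set $V = M \otimes_\Z \R$ regarded as a subspace of $B \otimes_\Z \R = \R^b$ with the standard inner product, and write $r = \rank(M)$. Since $M$ is both a quotient of $A = \Z^a$ and a subgroup of $B = \Z^b$, one has $r \le \min(a,b)$. Let $M' = B \cap V$ be the saturation of $M$ in $B$; a direct check shows $Q_\tors \cong M'/M$, so $|Q_\tors| = [M':M]$. Since $M$ and $M'$ are both full-rank lattices in $V$, this index equals $\operatorname{covol}(M)/\operatorname{covol}(M')$.

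The next step is to estimate each covolume separately. For the numerator, I would pick $r$ of the vectors $\phi(e_1), \dots, \phi(e_a)$ that span $V$ over $\R$; they generate a full-rank sublattice $N \subseteq M$, and Hadamard's inequality applied to their Gram matrix gives $\operatorname{covol}(N) \le \prod_i \norm{\phi(e_{j_i})} \le \alpha^r$. The inclusion $N \subseteq M$ then yields $\operatorname{covol}(M) \le \operatorname{covol}(N) \le \alpha^r$.

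For the denominator, the key observation is that $M' = B \cap V$ is \emph{primitive} in $\Z^b$, in the sense that $\Z^b/M'$ is torsion-free. Picking an integral basis $u_1, \dots, u_r$ of $M'$ and forming the $b \times r$ matrix $U = (u_1 \mid \cdots \mid u_r)$, the Cauchy--Binet formula expresses $\operatorname{covol}(M')^2 = \det(U^T U)$ as the sum of squares of the $r \times r$ minors of $U$. These minors are integers, and primitivity forces their $\gcd$ to be $1$, so at least one minor is nonzero and the sum is $\ge 1$. Hence $\operatorname{covol}(M') \ge 1$, and combining with the previous bound gives $|Q_\tors| \le \alpha^r \le \alpha^{\min(a,b)}$. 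The single delicate point I expect to have to think about is this primitivity-implies-$\operatorname{covol}\ge 1$ argument (which can equally well be deduced from a Smith normal form for the inclusion $M' \hookrightarrow \Z^b$); the rest of the argument is routine lattice geometry.
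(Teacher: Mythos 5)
Your proof is correct. Note that the paper does not reproduce a proof of this lemma at all: it only cites Soul\'e's original argument and Sauer's ``more elementary'' one, so there is no in-text proof to deviate from. Your route --- identify $Q_\tors$ with $M'/M$, where $M=\phi(A)$ and $M'$ is its saturation in $\Z^b$, express $[M':M]$ as the ratio $\operatorname{covol}(M)/\operatorname{covol}(M')$, bound $\operatorname{covol}(M)\le\alpha^{r}$ by Hadamard applied to $r$ independent vectors among the $\phi(e_i)$, and bound $\operatorname{covol}(M')\ge 1$ via Cauchy--Binet --- is essentially the elementary (Sauer-style) proof, and it is equivalent to the Gabber--Soul\'e argument in which Smith normal form identifies $|Q_\tors|$ with the gcd of the $r\times r$ minors of the matrix of $\phi$, each such minor being at most $\alpha^{r}$ by Hadamard. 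Two small remarks. First, primitivity of $M'$ is not actually needed for $\operatorname{covol}(M')\ge 1$: for \emph{any} rank-$r$ subgroup of $\Z^b$, some $r\times r$ minor of a basis matrix $U$ is a nonzero integer, so $\det(U^{T}U)\ge 1$ already; the gcd-equals-one observation is a harmless detour. Second, your final step $\alpha^{r}\le\alpha^{\min(a,b)}$ tacitly uses $\alpha\ge 1$ when $r<\min(a,b)$; for $r\ge 1$ this follows from your own chain $1\le\operatorname{covol}(M')\le\operatorname{covol}(M)\le\alpha^{r}$, and in the degenerate case $\phi=0$ with $\alpha<1$ the stated inequality itself fails, so this is a defect of the statement (implicitly $\alpha\ge 1$, as it is in the application where $\alpha=\sqrt{\beta}$) rather than of your argument.
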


\begin{cor}
  \label{cor:H-C}
  Suppose that the cellular complex $\Gamma\bs \W$ has at most
  $\alpha_k$ faces for any $k\ge 0$, and that any $k$-cell has at most
  $\beta$ codimension 1 faces. Then
  \begin{align*}
    H_k(\CC^+_\bullet)_\tors \le \beta^{\frac{1}{2} \min(\alpha_{k+1}, \alpha_k)}.
  \end{align*}
\end{cor}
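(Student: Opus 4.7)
The plan is to apply Gabber's lemma directly to the cellular boundary map $\partial_{k+1}: \CC^+_{k+1} \to \CC^+_k$, and then relate the torsion of its cokernel to the torsion of $H_k(\CC^+_\bullet)$.

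First, observe that $\CC^+_k$ is, by construction, a free abelian group with one generator per orientation-preserving $k$-cell of $\Gamma\bs\W$, so $\CC^+_k \cong \Z^{a_k}$ for some $a_k \le \alpha_k$, and I equip the real extension with the standard Euclidean norm for which the cells form an orthonormal basis. Next I estimate the size of $\partial_{k+1}$ on each basis vector: the cellular boundary of a $(k+1)$-cell $\sigma$ is a signed sum $\sum \pm \tau$ over its codimension one faces $\tau$, with coefficients in $\{-1,0,+1\}$. Since by assumption $\sigma$ has at most $\beta$ such faces, the image $\partial_{k+1}(\sigma)$ has at most $\beta$ nonzero coordinates each of absolute value at most $1$, so its Euclidean norm is at most $\sqrt{\beta}$. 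Gabber's lemma (Lemma~\ref{lemma:Gabber}), applied with $a=a_{k+1}\le\alpha_{k+1}$, $b=a_k\le\alpha_k$ and $\alpha=\sqrt{\beta}$, then yields
\begin{align*}
   |\coker(\partial_{k+1})_\tors| \;\le\; (\sqrt{\beta})^{\min(a_{k+1},a_k)} \;\le\; \beta^{\frac{1}{2}\min(\alpha_{k+1},\alpha_k)}.
\end{align*}

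Finally I translate this cokernel bound into a bound on $H_k(\CC^+_\bullet)_\tors$. From the short exact sequence
\begin{align*}
   0 \longrightarrow H_k(\CC^+_\bullet) \longrightarrow \CC^+_k/\im(\partial_{k+1}) \longrightarrow \im(\partial_k) \longrightarrow 0,
\end{align*}
the rightmost term is torsion free since it is a subgroup of the free abelian group $\CC^+_{k-1}$; hence the torsion subgroup of $H_k(\CC^+_\bullet)$ injects into, and in fact coincides with, the torsion subgroup of $\coker(\partial_{k+1})$. Combining with the above inequality yields the claim.

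There is no real obstacle here once Gabber's lemma is granted: the only point that requires a moment of care is the identification of the right norm (so that each basis cell is a unit vector and the $\pm 1$ coefficients of the cellular boundary give the $\sqrt{\beta}$ bound rather than $\beta$), which is exactly where the factor $\tfrac{1}{2}$ in the exponent comes from.
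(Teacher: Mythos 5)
Your proof is correct and follows essentially the same route as the paper: apply Gabber's lemma to the cellular boundary map with the $\sqrt{\beta}$ norm bound on each basis cell, then pass from the torsion of $\mathrm{coker}(\partial_{k+1})$ to that of $H_k(\CC^+_\bullet)$. The only difference is that you spell out the last step (via the exact sequence showing $\im(\partial_k)$ is torsion free), which the paper dispatches with ``a fortiori''.
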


\begin{proof}
        For a cell $c \in \CC_{k+1}^+$, its image by the boundary map $\partial$ is a sum of 
        at most $\beta$ $k$-cells, so that $\norm{\partial c} \le \sqrt{\beta}$. Thus, 
        by Lemma~\ref{lemma:Gabber}
        $\mathrm{coker}(\partial)_\tors$ is bounded by $\beta^{\frac{1}{2} 
        \min(\alpha_{k+1}, \alpha_k)}$ and a fortiori so is $H_k(\CC_\bullet^+)_\tors$. 
\end{proof}


\subsection{Counting the cells}
\label{sec:counting-cells}

Suppose that the finite subset $\Phi \subset L$ has the following property:
\begin{quote}
for any well-rounded pair $(L, h)$, there exists $\gamma \in \Gamma =
\GL_N(\O_F)$ such that $\gamma M(L,h) \subset \Phi$. 
\end{quote}
In other words, $\Phi$ contains a representative of every
element of $\Gamma\bs \W$. Since $C(h)$ has
codimension $j$, where $N+j$ is the cardinality of $M(L,h)$, it follows
immediately that the number of cells of codimension $j$ in $\Gamma\bs\W$
is bounded by the binomial coefficient $\binom{\card(\Phi)}{N+j}$.
For large $\card(\Phi)$ we lose little by bounding this binomial
coefficient by $\card(\Phi)^{N+j}$. Recall that $\Gamma\bs\W$ has dimension 
$\dim(X) - N $, so that for a $k$-cell of codimension $j$ we have
$N + j = \dim(X) -k$.  
For the dimension of $X$ we have (where $(r_1, r_2)$ is the signature of $F$):
\begin{align}
        \dim(X) &= r_1 \frac{N(N+1)}{2} + r_2 N^2 \\
        &\le d \, \frac{N(N+1)}{2}.
        \label{eq:dim-X}
\end{align}
Thus, for the number
of $k$-cells in $\Gamma\bs\W$ we can use the following upper bound:
\begin{align}
        \alpha_k &= \card(\Phi)^{d \cdot \frac{N(N+1)}{2} - k} 
  \label{eq:alpha_k-Phi}
\end{align}

By a similar counting argument, Soul\'e shows in \cite[proof of Prop.~3]{Soule03} that there are at most $\beta = \card(\Phi)^{N+1}$ faces of codimension $1$ in
any given cell (not necessarily top dimensional) on $\Gamma\bs\W$.

\subsection{Bounds for $K$-theory in terms of $\Phi$}
\label{sec:bounds-K-theory}

Let $\ell = \max(d+1,2n+2)$.
By Corollary~\ref{cor:H-C} and \eqref{eq:bound-Kn-H-C} we have that $\card_\ell \,
K_n(\O_F)_\tors$ is bounded by $\beta^{\frac{1}{2} \alpha_{n+1}}$, where $\alpha_{n+1}$
and $\beta$ can be chosen as in Section~\ref{sec:counting-cells} (with $N = 2n+1$).
This gives (using now logarithmic notation):
\begin{align}
        \log \card_\ell\, K_n(\O_F)_\tors &\le  (n+1)\, \log(\card(\Phi))\, 
        \card(\Phi)^{e(d,n)}, 
        \label{eq:bound-Kn-Phi}
\end{align}
where $\Phi \subset L$ has the property defined in Section~\ref{sec:counting-cells}, and   
\begin{align}
       e(d,n) &= d(2n^2 + 3n + 1) - n - 1. 
        \label{eq:e-dn}
\end{align}

This reduces the problem to finding such a set $\Phi \subset L$ of size as small as
possible. In \cite{Soule03} Soul\'e constructed a suitable set $\Phi$ using the geometry of numbers. 
In what follows, we will exhibit a smaller $\Phi$ by using better estimates on hermitian
lattices. 

\section{Hermitian lattices and bounded bases}
\label{sec:hermit-lattice-bounded-bases}

The goal of this section is to construct in any well-rounded
lattice $(L, h)$ a basis whose vectors have bounded length, with respect
to the norm induced by $h$. The method in an adaptation of the idea used by
Soul\'e in \cite{Soule03} (see Section~\ref{sec:bounded-bases} below),
in which we incorporate the results from \cite{Bayer06}, corresponding to 
the rank one case.

\subsection{Geometry of ideal lattices}
\label{sec:ideal-lattices}

Let $I \subset F_\R$ be a nonzero $\O_F$-submodule of the form $I = x \a$, where
$x \in F_\R$ and $\a$ is a fractional ideal of $F$. We define the
\emph{norm} of $I$ by the rule $\No(I) = \No(x) \No(\a)$.
Let $q_0$ be the standard (positive definite) hermitian metric on
$F_\R$, i.e., for $x \in F_\R$: $q(x) = \Tr(x \overline{x})$ .
The pair $(I, q_0)$ is an \emph{ideal lattice} (over $F$) in the sense
of \cite[Def.~2.2]{Bayer06}. Its determinant 
is given by (see \cite[Cor.~2.4]{Bayer06}):
\begin{align}
  \det(I, q_0) &= \No(I)^2 \, |D_F|. 
  \label{eq:det}
\end{align}

Let us denote by $\norm{\cdot}$ the norm on $F_\R$ induced by the hermitian metric
$q_0$.
Estimates for the geometry of ideal lattices have been studied in
\cite{Bayer06}. For our particular case $(I, q_0)$, Proposition 4.2 in
loc.\  cit.\ takes the following form. 
\begin{prop}
  \label{prop:eva}
  Let $F$ of degree $d$, with discriminant $D_F$, and consider the ideal
  lattice $(I, q_0)$.
  Then for any $x \in F_\R$ there exists 
  $y \in I$ such that $\norm{x-y} \le R$, where
  \begin{align*}
    R &= \frac{\sqrt{d}}{2} |D_F|^{1/d} \No(I)^{1/d}.
  \end{align*}
\end{prop}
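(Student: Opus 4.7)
The plan is to deduce this statement directly from \cite[Prop.~4.2]{Bayer06}, which gives a general covering radius bound for ideal lattices over $F$. Only a routine specialization to our specific $(I, q_0)$ is required; the arithmetic content is entirely absorbed into Bayer's result.

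First, I would verify that $(I, q_0)$ fits into the framework of ideal lattices as in \cite[Def.~2.2]{Bayer06}: the $\O_F$-module $I = x\a$ sits inside $F_\R$ as a lattice of full rank, and the standard trace form $q_0(z) = \Tr(z\bar z)$ is $\O_F$-equivariant in the required sense. Second, I would apply \cite[Prop.~4.2]{Bayer06}, which provides a covering radius bound expressed in terms of $d$ and intrinsic invariants of the ideal lattice (essentially $|D_F|$ and $\No(I)$). Third, I would substitute the explicit value $\det(I, q_0) = \No(I)^2 |D_F|$ given by \eqref{eq:det} (i.e.\ \cite[Cor.~2.4]{Bayer06}) and simplify the resulting expression to obtain the desired $R = \tfrac{\sqrt{d}}{2}|D_F|^{1/d}\No(I)^{1/d}$.

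The real content, which I would import as a black box from \cite{Bayer06}, is the general covering radius bound itself. The main obstacle there is that a purely lattice-theoretic argument via Minkowski's second theorem on successive minima is insufficient: applied to the lattice $I \subset F_\R$ of covolume $\No(I)|D_F|^{1/2}$, it only yields a covering radius estimate of order $|D_F|^{1/2}\No(I)^{1/d}$, since arbitrary lattices of the given covolume can be highly skew. The improvement to the exponent $|D_F|^{1/d}$ must exploit the multiplicative $\O_F$-module structure of the ideal lattice: a natural strategy is to use Minkowski's first theorem to produce a nonzero $\alpha \in I$ with $\No(\alpha)$ controlled by $\No(I)|D_F|^{1/2}$, so that the principal sublattice $\alpha\O_F \subset I$ has bounded index, and then bound the covering radius of $\alpha \O_F$ by rounding in an integral basis of $\O_F$ while applying an arithmetic--geometric mean bound across the archimedean embeddings to balance the approximation error across coordinates. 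This is the decisive step that genuinely uses that we are dealing with an ideal in a number field rather than an arbitrary Euclidean lattice.
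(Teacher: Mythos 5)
Your proposal is correct and coincides with what the paper does: the proposition is obtained purely by specializing \cite[Prop.~4.2]{Bayer06} to the ideal lattice $(I,q_0)$ and inserting the determinant value $\det(I,q_0)=\No(I)^2|D_F|$ from \cite[Cor.~2.4]{Bayer06}, with no independent argument given. Your closing sketch of how Bayer's covering bound is proved is extra commentary on the black box and plays no role in the verification.
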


\subsection{Three consequences}

From Proposition~\ref{prop:eva} we deduce the three following lemmas.

\begin{lemma}
  \label{lemma:C2}
  Given $x = (x_\sigma) \in F_\R$, there exists $a \in \O_F$ such that  
  \begin{align*}
    \sum_{\sigma \in \Sigma} |x_\sigma - \sigma(a)| &\le C_2,
  \end{align*}
  where
  \begin{align}
    C_2 &= \frac{d}{2}\, |D_F|^{1/d}.    
    \label{eq:C2}
  \end{align}
\end{lemma}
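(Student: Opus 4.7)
The plan is to apply Proposition~\ref{prop:eva} in the special case $I = \O_F$ and then pass from the $L^2$-type estimate that $q_0$ provides to the $L^1$-type estimate appearing in the statement, via Cauchy--Schwarz.

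First I would observe that with $x = 0$ and $\a = \O_F$ we have $I = \O_F$ and $\No(I) = 1$, so Proposition~\ref{prop:eva} supplies, for the given $x = (x_\sigma) \in F_\R$, an element $y \in \O_F$ with
\begin{align*}
\norm{x - y} \le \frac{\sqrt{d}}{2}\,|D_F|^{1/d}.
\end{align*}
Here $\norm{\cdot}$ is the norm coming from the standard hermitian metric $q_0(z) = \Tr(z\overline{z})$ on $F_\R$, which under the identification $F_\R \cong (\C^\Sigma)^+$ reads
\begin{align*}
\norm{z}^2 = \sum_{\sigma \in \Sigma} |z_\sigma|^2.
\end{align*}
The element $y$ is of the form $y = (\sigma(a))_{\sigma \in \Sigma}$ for some $a \in \O_F$, since inside $F_\R$ the ring $\O_F$ embeds diagonally through all its field embeddings.

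The remaining step is to compare the sum $\sum_\sigma |x_\sigma - \sigma(a)|$ with $\norm{x-y}$. Since $\Sigma$ has $d$ elements, the Cauchy--Schwarz inequality applied to the vectors $(|x_\sigma - \sigma(a)|)_\sigma$ and $(1,\ldots,1)$ gives
\begin{align*}
\sum_{\sigma \in \Sigma} |x_\sigma - \sigma(a)| \;\le\; \sqrt{d}\cdot\norm{x-y} \;\le\; \sqrt{d}\cdot\frac{\sqrt{d}}{2}\,|D_F|^{1/d} \;=\; \frac{d}{2}\,|D_F|^{1/d} \;=\; C_2,
\end{align*}
which is the desired bound.

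There is no real obstacle here: the lemma is essentially a reformulation of Proposition~\ref{prop:eva} for the unit ideal, combined with the elementary conversion between the Euclidean norm on $F_\R$ and the sum of the absolute values of the components. The only things to check carefully are that $\O_F$ as a lattice in $F_\R$ really corresponds to taking $x = 1$, $\a = \O_F$ in the definition preceding Proposition~\ref{prop:eva} (so that $\No(I) = 1$), and that the identification of $F_\R$ with $(\C^\Sigma)^+$ realises $a \in \O_F$ as the vector $(\sigma(a))_\sigma$; both follow directly from the setup in Section~\ref{sec:ideal-lattices}.
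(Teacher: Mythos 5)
Your argument is correct and is essentially the paper's own proof: apply Proposition~\ref{prop:eva} with $I = \O_F$ (so $\No(I)=1$) and then convert the Euclidean bound to the sum of absolute values via $(\sum_i b_i)^2 \le d \sum_i b_i^2$, i.e.\ Cauchy--Schwarz. Only a slip of the pen to fix: the ideal lattice $I=\O_F$ corresponds to the parameters $x=1$, $\a=\O_F$ in the definition of Section~\ref{sec:ideal-lattices} (not $x=0$, which would give $I=0$), as you yourself note at the end.
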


\begin{proof} 
  First note the general inequality $(\sum_{i=1}^d b_i)^2 \le d \cdot
  \sum_{i=1}^d b_i^2$, which follows from applying the summation $\sum_{i,j}$ on both
  sides of
  \begin{align*}
    2b_i b_j &\le b_i^2 + b_j^2.
  \end{align*}
  This implies that for $a = y \in I$ as in Proposition~\ref{prop:eva} with
  $I = \O_F$, we have
  \begin{align*}
    \sum_{\sigma \in \Sigma} |x_\sigma - \sigma(a)| &\le \sqrt{d \cdot
      \sum_{\sigma \in \Sigma} |x_\sigma - \sigma(a)|^2}\\
      &= \sqrt{d} \; \norm{x-a}\\
      &\le \frac{d}{2} |D_F|^{1/d}.  
  \end{align*}
\end{proof}

\begin{lemma}
  \label{lemma:C3}
  Given $x = (x_\sigma) \in F_\R$, there exists $a \in \O_F$ such that 
  \begin{align*}
    \sup_{\sigma \in \Sigma} |\sigma(a)\, x_\sigma | &\le C_3 \, \No(x)^{1/d}, 
  \end{align*}
  where
  \begin{align}
    C_3 &= \sqrt{d} \cdot |D_F|^{1/d}.
    \label{eq:C3}
  \end{align}
\end{lemma}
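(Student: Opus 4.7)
The natural strategy is to convert the multiplicative inequality into a shortest-vector problem on the ideal lattice $I = x\O_F \subset F_\R$, and then invoke Proposition~\ref{prop:eva}. If $\No(x) = 0$ the bound is trivial and one may take $a = 0$; assume $\No(x) \neq 0$. Then $(I, q_0)$ is an ideal lattice with $\No(I) = \No(x)$, and by Proposition~\ref{prop:eva} its covering radius in the norm $\norm{\cdot}$ is bounded by $R = \frac{\sqrt{d}}{2}|D_F|^{1/d}\No(x)^{1/d}$.

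The next step I would carry out is to upgrade this covering-radius bound into a bound on the length of a nonzero element of $I$. For any $z \in F_\R$ with $\norm{z} > R$, Proposition~\ref{prop:eva} produces some $v \in I$ with $\norm{z - v} \le R$; this $v$ is necessarily nonzero (otherwise $\norm{z - v} = \norm{z} > R$), and the triangle inequality yields $\norm{v} \le \norm{z} + R$. Choosing $\norm{z}$ only slightly above $R$ and using the discreteness of $I$ to remove the infinitesimal slack, one obtains a nonzero $v \in I$ with $\norm{v} \le 2R$.

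Writing this $v$ as $v = xa$ with $a \in \O_F \setminus \{0\}$ and applying the elementary inequality $\sup_\sigma |v_\sigma| \le \sqrt{\sum_\sigma |v_\sigma|^2} = \norm{v}$, one concludes
\begin{align*}
  \sup_{\sigma \in \Sigma} |\sigma(a)\, x_\sigma| \;=\; \sup_\sigma |v_\sigma| \;\le\; \norm{v} \;\le\; 2R \;=\; \sqrt{d}\,|D_F|^{1/d}\,\No(x)^{1/d},
\end{align*}
which is the claimed inequality with $C_3 = \sqrt{d}\,|D_F|^{1/d}$.

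The main obstacle is the middle step: Proposition~\ref{prop:eva} is an approximation (covering) statement, not a short-vector statement, and the detour through a point of norm slightly exceeding $R$ loses a factor of $2$. That factor of $2$ is exactly what turns the constant $\sqrt{d}/2$ appearing in Proposition~\ref{prop:eva} into the $\sqrt{d}$ defining $C_3$; the rest of the argument is just choosing the right ideal lattice $I = x\O_F$ so that the additive approximation bound becomes the desired multiplicative one.
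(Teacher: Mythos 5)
Your proof is correct and follows essentially the same route as the paper: apply Proposition~\ref{prop:eva} to the ideal lattice $I = x\O_F$ and convert the covering bound $F_\R = I + B_R(0)$ into the existence of a nonzero $xa \in I$ of length at most $2R$, then use $\sup_\sigma|\sigma(a)x_\sigma| \le \norm{xa}$. You merely spell out the standard covering-radius-to-shortest-vector step (and the degenerate case $\No(x)=0$) that the paper leaves implicit.
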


\begin{proof}
  We can suppose that $x \neq 0$. We consider the ideal lattice
  $(I, q_0)$ with $I = x \O_F$.
  For $R$ as in Proposition~\ref{prop:eva}, we have
  that $F_\R = I + B_{R}(0)$, where $B_{R}(0)$ is the closed ball of radius
  $R$ with respect to $\norm{\cdot}$. In particular, the smallest
  (nonzero) vector $xa \in I = x\O_F$ has length $\le 2R$. That is, there exists 
  $a \in \O_F$ such that 
  \begin{align*}
    \sup_{\sigma \in \Sigma} |\sigma(a)\, x_\sigma | &\le \norm{xa} \\
    &\le 2R;\\
  \end{align*}
  and the result follows.
\end{proof}

\begin{lemma}
  \label{lemma:representatives}
  Let $\a$ be an ideal of $\O_F$. Then there exists a set $\mathcal{R}
  \subset \O_F$ of representatives of $\O_F/\a$ such that for any $x \in
  \mathcal{R}$ we have
  \begin{align*}
    \sum_{\sigma \in \Sigma} |\sigma(x)| &\le  C_2\, \No(\a)^{1/d}.
  \end{align*}
\end{lemma}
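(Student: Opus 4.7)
The plan is to adapt the argument of Lemma~\ref{lemma:C2} almost verbatim, but with the ideal lattice $(\a, q_0)$ in place of $(\O_F, q_0)$. The point is that Lemma~\ref{lemma:C2} handled the special case $\a = \O_F$ (where the representative set is a singleton), and the more general statement follows by running exactly the same estimate on each coset of $\a$ in $\O_F$.

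Concretely, I would proceed as follows. Given a coset $y + \a \in \O_F/\a$, view $y$ as an element of $F_\R$ via the canonical embedding. Applying Proposition~\ref{prop:eva} to the ideal lattice $(\a, q_0)$, for which the norm is $\No(\a)$, yields an element $a \in \a$ with
\begin{align*}
  \norm{y - a} &\le \tfrac{\sqrt{d}}{2}\, |D_F|^{1/d}\, \No(\a)^{1/d}.
\end{align*}
Set $x = y - a$; then $x \in \O_F$ since $y \in \O_F$ and $a \in \a \subset \O_F$, and $x$ represents the same coset as $y$ in $\O_F/\a$.

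To pass from the $\norm{\cdot}$-bound to the bound on $\sum_\sigma |\sigma(x)|$, I would reuse the elementary inequality $\bigl(\sum_{i=1}^d b_i\bigr)^2 \le d \sum_{i=1}^d b_i^2$ established in the proof of Lemma~\ref{lemma:C2}. Applied to the tuple $(|\sigma(x)|)_{\sigma \in \Sigma}$, this gives
\begin{align*}
  \sum_{\sigma \in \Sigma} |\sigma(x)| \;\le\; \sqrt{d} \cdot \norm{x} \;\le\; \sqrt{d} \cdot \tfrac{\sqrt{d}}{2}\, |D_F|^{1/d}\, \No(\a)^{1/d} \;=\; C_2\, \No(\a)^{1/d}.
\end{align*}
Choosing one such $x$ for each of the finitely many cosets of $\a$ in $\O_F$ produces the desired set $\mathcal{R}$.

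There is essentially no obstacle here: the work has already been done in Proposition~\ref{prop:eva} and in the Cauchy–Schwarz-type step from the proof of Lemma~\ref{lemma:C2}. The only thing worth double-checking is that Proposition~\ref{prop:eva}, stated for the $\O_F$-module $I = x\a$, applies with $x = 1$ to give the statement for the integral ideal $\a$ itself, and that $\No(\a)$ coincides there with the usual absolute ideal norm — both are immediate from the definitions given in Section~\ref{sec:ideal-lattices}.
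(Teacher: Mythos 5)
Your proposal is correct and follows essentially the same route as the paper: apply Proposition~\ref{prop:eva} to the ideal lattice $(\a, q_0)$ to find, in each coset of $\a$ in $\O_F$, a representative of $\norm{\cdot}$-length at most $R = \tfrac{\sqrt{d}}{2}|D_F|^{1/d}\No(\a)^{1/d}$, then convert to the $\sum_\sigma|\sigma(x)|$ bound via the $\bigl(\sum b_i\bigr)^2 \le d\sum b_i^2$ step from Lemma~\ref{lemma:C2}. Nothing further is needed.
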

\begin{proof}
 Let us consider the ideal lattice $(I, q) = (\a, q_0)$, and let
 $R$ be as in Proposition~\ref{prop:eva}. Then for any $x \in \O_F
 \subset F_\R$, there exists $y \in I$ such that $\norm{x-y} \le R$. 
 But $x - y \equiv x\; (I)$, so that the closed ball $B_R(0)$ contains a
 representative of each class of $\O_F/\a$. The inequality is then
 obtained as in the proof of Lemma~\ref{lemma:C2}. 
\end{proof}

\subsection{Existence of bounded bases}
\label{sec:bounded-bases}

\begin{lemma}[Soul\'e]
  \label{lem:basis-induction}
  Let $L = L_1 \oplus \cdots \oplus L_N$ be a decomposition of the
  hermitian lattice $(L, h)$ into rank one lattices,
  and suppose that each $L_i$ contains a vector $f_i$ with
  $|L_i/\O_F f_i| \le k$ and $\norm{f_i}_h \le k\lambda$, for some
  $k, \lambda > 1$.
  Then $L$ has a basis $e_1, \dots, e_N$ such that 
  \begin{align*}
    \norm{e_i}_h &\le \lambda \left( 1 + C_2 \right)^{t+1} k^{(d+1)(4N-1)},
  \end{align*}
  where $t = \lfloor \log_2(N) \rfloor + 1$.
\end{lemma}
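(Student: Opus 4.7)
I plan to prove this by strong induction on $N$ using a divide-and-conquer structure, with the rank-one consequences of Proposition~\ref{prop:eva} providing both the base case and the merging step. The $\log_2 N$ appearing in the exponent of $(1+C_2)$ is the signature of halving the dimension at each inductive step.

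\textbf{Base case $N=1$.} Since $L$ is free of rank one, $L = \O_F g$ for some $g$ and $f_1 = a g$ with $|\No(a)| \le k$; the difficulty is that the coordinates of $g$ across the $d$ embeddings may be wildly imbalanced compared to those of $f_1$. To produce a short basis vector, I would multiply $g$ by a unit of $\O_F$ rebalancing its coordinates, obtained by applying Lemma~\ref{lemma:C3} to the principal ideal $(a)$ (combined with a Dirichlet-type normalization to turn the resulting element into a genuine unit). The output is a basis vector $e_1$ of $L$ satisfying $\norm{e_1}_h \le \lambda (1+C_2)^{2} k^{3(d+1)}$, which matches the claim since for $N=1$ we have $t=1$ and $4N-1=3$.

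\textbf{Inductive step.} For $N > 1$, I would split $L = L' \oplus L''$ with $L' = L_1 \oplus \cdots \oplus L_{\lceil N/2 \rceil}$ and $L''$ the remainder, so that the $\log_2$-value associated to each half is at most $t-1$. The main obstacle is that neither $L'$ nor $L''$ need be free as an $\O_F$-module, only their sum $L$ is, so the induction hypothesis cannot be applied directly. I would resolve this using Lemma~\ref{lemma:representatives} to produce a small-norm representative of the Steinitz class of $L'$, allowing me to replace $L'$ and $L''$ by free sublattices of controlled index inside $L$. This correction inflates the norm bound by a factor of $1+C_2$ and the index bound by a bounded power of $k$ proportional to the current rank. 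The induction hypothesis then furnishes bases of each free block, whose concatenation is a basis of $L$.

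\textbf{Exponent tracking and main obstacle.} Unrolling the recursion over the $t+1 = \lfloor \log_2 N \rfloor + 2$ levels, the factor $1+C_2$ accrues exactly once per level, giving $(1+C_2)^{t+1}$. The exponent of $k$ accumulates along an approximately geometric series whose sum is roughly $4N$, matching $(d+1)(4N-1)$ after accounting for the $3(d+1)$ contribution from the base case. The hardest point will be bookkeeping for the Steinitz-class correction at each level: one must verify that the small representative from Lemma~\ref{lemma:representatives} is chosen compatibly with the basis constructed at earlier levels, so that the $k$-exponent grows additively (with slope at most $4(d+1)$ per rank-one increment) rather than multiplicatively, which would blow up the bound by a further factor logarithmic or polynomial in $N$ in the exponent.
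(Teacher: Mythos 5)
Your high-level shape is not unreasonable: the exponent $t+1=\lfloor\log_2 N\rfloor+2$ in the factor $(1+C_2)^{t+1}$ does come from a halving recursion in Soul\'e's argument, which is what the paper's proof invokes (it follows the proof of Proposition~1 of \cite{Soule03} verbatim, with his Lemma~6 replaced by Lemma~\ref{lemma:representatives}, whence the factor $1+C_2$ in place of $1+C_2\frac{d+3}{4}$). But your plan has a fatal gap exactly where the content sits: the rank-one base case. From a rank-one free lattice containing a vector of small index and small length you cannot, in general, extract a short \emph{generator}. Lemma~\ref{lemma:C3} produces an algebraic integer $a$, not a unit, and multiplying by $a$ changes the index; there is no ``Dirichlet-type normalization'' turning it into a unit at a cost of the form $d^{O(1)}|D_F|^{O(1)}$, because rebalancing archimedean components by units is governed by the regulator, not the discriminant. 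Concretely, take $F$ real quadratic with fundamental unit $\varepsilon$, $L=\O_F$, and $q_h(x)=s\,\sigma_1(x)^2+s^{-1}\sigma_2(x)^2$ with $s=\varepsilon$: Minkowski gives $f_1\in\O_F$ with $|\O_F/f_1\O_F|\le c|D_F|^{1/2}$ and $\norm{f_1}_h\le c|D_F|^{1/4}$, yet every generator of $\O_F$ is a unit $\pm\varepsilon^n$ and satisfies $\norm{\pm\varepsilon^n}_h\ge\varepsilon^{1/2}=e^{R_F/2}$, which exceeds any bound of the shape $\lambda(1+C_2)^{2}k^{3(d+1)}$ once the regulator is moderately large compared with $\log|D_F|$ (such fields exist in abundance). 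So the $N=1$ statement your recursion bottoms out on is simply not available; worse, importing regulator-sized factors would reintroduce precisely the $\exp(|D_F|^{1/2})$-type losses that this paper is designed to eliminate from Soul\'e's bound. (Read literally at $N=1$ the lemma itself would fail, which is harmless for the paper since it is applied with $N\ge5$ and Soul\'e's induction never passes through such a rank-one extraction, but it is fatal for a proof that does.)

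The second weak point is the merge step. Lemma~\ref{lemma:representatives} provides representatives of the residue classes of $\O_F/\a$ of small trace-norm; it says nothing about choosing a small representative of a Steinitz (ideal) class, and your claim that it lets you replace the two halves $L'$, $L''$ by free sublattices of controlled index at a cost of one factor $1+C_2$ and a bounded power of $k$ per level is exactly what would need to be proved, not assumed. In the actual argument the basis of the free lattice $L$ is built globally: one never produces generators of the rank-one pieces, but instead corrects the vectors $f_i$ by $\O_F$-combinations of the basis vectors already constructed, the coefficients being chosen as small residue representatives modulo suitable ideals via Lemma~\ref{lemma:representatives}; it is this rank~$\ge 2$ freedom that makes a short basis of $L$ reachable even though short generators of the individual $L_i$ need not exist. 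Without a correct base case and an explicit gluing mechanism, the exponent bookkeeping in your last paragraph has nothing to act on.
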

\begin{proof}
  The statement and its proof is essentially contained in the proof of  
  \cite[Prop.~1]{Soule03}. The main difference is that our constant
  $C_2$ is now smaller than $C_2$ in loc.\ cit.  We can follow verbatim
  the same proof with the new $C_2$ except for the  use of
  Lemma 6 (needed in Lemma 7) of loc.\ cit., which must be replaced by
  Lemma~\ref{lemma:representatives}. Accordingly, the factor $(1 + C_2 \frac{r+3}{4})$ 
  (where $r=d$) is replaced by  $1 + C_2$.
\end{proof}

To obtain a bounded basis for $(L, h)$ we need to find elements $f_i$
that satisfy the condition of Lemma~\ref{lem:basis-induction}. This is
done in the following proposition.

\begin{prop}
  \label{prop:exist-f_i}
  Let $(L, h)$ be a free hermitian lattice over $\O_F$  of rank $N$,
  with $F \neq \Q$.
  We suppose that there exist $e_1, \dots, e_N \in L$ that span $V = F
  \otimes_{\O_F} L$ and such that $\norm{e_i}_h \le 1$ for $i =
  1,\dots,N$.
  Then there exists a decomposition $L = L_1 \oplus \dots \oplus L_N$
  and elements $f_i \in L_i$ such that: 
  \begin{align*}
    |L_i/f_i \O_F| &\le C_1 C_3^d\; ; \\
    \norm{f_i}_h &\le i C_1 C_2 C_3^d,
  \end{align*}
  where $C_1 = |D_F|^{1/2}$, and $C_2$ (resp. $C_3$) is defined in
  \eqref{eq:C2} (resp. \eqref{eq:C3}).
\end{prop}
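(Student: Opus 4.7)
The plan is to work by induction on the rank $N$, peeling off one rank-one factor at a time using the short spanning vectors $e_i$ and the rank-one estimates of Section~\ref{sec:ideal-lattices}. The overall template is a Gram--Schmidt-type construction: use an $h$-orthogonal projection to extract a rank-one ideal lattice, apply Proposition~\ref{prop:eva} there, and then lift back to $L$ while controlling the lifting error via Lemma~\ref{lemma:representatives}.

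\emph{Step 1 (Flag and splitting).} Since the $e_i$ span $V$ over $F$, after reordering we may assume that $e_1,\dots,e_N$ form an $F$-basis. Set $V_i=Fe_1+\cdots+Fe_i$ and $L_i^{\mathrm{sat}}=L\cap V_i$. Each successive quotient $L_i^{\mathrm{sat}}/L_{i-1}^{\mathrm{sat}}$ is a rank-one projective $\O_F$-module, isomorphic to a fractional ideal $\mathfrak{a}_i$. Since projective modules split off, the short exact sequences yield a decomposition $L=L_1\oplus\cdots\oplus L_N$ with $L_i\cong\mathfrak{a}_i$.

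\emph{Step 2 (Rank-one estimate).} Let $\pi_i:V_\R\to V_{i-1,\R}^\perp\cap V_{i,\R}$ denote the $h$-orthogonal projection. Being $F_\R$-linear, $\pi_i$ identifies $L_i^{\mathrm{sat}}/L_{i-1}^{\mathrm{sat}}$ with a rank-one $\O_F$-lattice $\bar L_i=\pi_i(L_i^{\mathrm{sat}})$. Choosing a generator of the $F_\R$-line $\pi_i(V_{i,\R})$ and rescaling to normalize the metric, we exhibit $\bar L_i$ as an ideal lattice in the sense of Section~\ref{sec:ideal-lattices}. Since $\pi_i(e_i)\in\bar L_i$ has $h$-norm $\le\norm{e_i}_h\le 1$, the determinant formula \eqref{eq:det} controls $\No(\bar L_i)$ in terms of $|D_F|$, and Proposition~\ref{prop:eva} (or the covering-radius argument of Lemma~\ref{lemma:C3}) produces a nonzero $\bar f_i\in\bar L_i$ with
\[
  |\bar L_i/\bar f_i\O_F|\le C_1 C_3^d,\qquad \norm{\bar f_i}_h\le C_1 C_3^d.
\]
Since $L_i\cong\bar L_i$ as $\O_F$-modules, the index bound transports to $|L_i/f_i\O_F|\le C_1 C_3^d$ for the lift $f_i\in L_i$ constructed next.

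\emph{Step 3 (Lifting and norm accumulation).} The splitting in Step~1 provides a unique lift $f_i\in L_i$ of $\bar f_i$. Because $L_i$ is not $h$-orthogonal to $V_{i-1}$, this lift differs from $\bar f_i$ by a vector in $L_1\oplus\cdots\oplus L_{i-1}$. We exploit the residual freedom in the splitting: along each $L_j$ with $j<i$, we replace the $L_j$-component of $f_i$ by a chosen representative of its class modulo $f_j\O_F$, using Lemma~\ref{lemma:representatives} to bound the representative by $C_2$ times a factor controlled by $\norm{f_j}_h$ and the structure of $L_j$. Combined with the inductive hypothesis $\norm{f_j}_h\le j\,C_1 C_2 C_3^d$, this leads to the claimed bound $\norm{f_i}_h\le i\,C_1 C_2 C_3^d$.

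\emph{Main obstacle.} The technical heart lies in keeping the norm estimate linear in $i$. A naive accounting of errors through the $i-1$ previous summands, using the inductive bound on $\norm{f_j}_h$, would yield a quadratic (or worse) growth. Achieving linearity requires exploiting the orthogonality built into $\pi_i$: the reductions in distinct $L_j$'s must be made independently, so that each contribution is controlled by the $j$-th rank-one estimate alone rather than by a cumulative factor. Verifying that Lemma~\ref{lemma:representatives} can be applied coordinate-by-coordinate without amplification — this is exactly the content of the analogous step in the proof of \cite[Prop.~1]{Soule03}, which we are adapting with the improved constant $C_2$ — is the delicate part of the argument.
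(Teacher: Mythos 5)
Your Steps 1--2 are in the right spirit and essentially reproduce the paper's rank-one analysis in projected form (a short vector in the rank-one ideal lattice bounds both its norm and its index, as in the $N=1$ case of the paper, which uses Lemma~\ref{lemma:C3} together with the comparison of indices $|L/\O_F x| = |N(\alpha)|\,|L/\O_F e_1|$). The genuine gap is Step~3, which is the heart of the proposition: you do not prove the norm bound $\norm{f_i}_h \le i\,C_1C_2C_3^d$, and the accounting you sketch cannot give it. Bounding the correction along each $L_j$, $j<i$, by $C_2$ times a quantity controlled by $\norm{f_j}_h$ and then inserting the inductive bound $\norm{f_j}_h\le j\,C_1C_2C_3^d$ produces, after summing over $j<i$, growth at least quadratic in $i$ and an extra factor of $C_2$ -- as you yourself concede -- and the proposed repair (``exploit the orthogonality built into $\pi_i$'' so that each contribution is governed by the $j$-th rank-one estimate alone) is asserted, not argued. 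It is also not clear that a ``coordinate-by-coordinate'' reduction is metrically meaningful: subtracting elements of $f_j\O_F$ changes only the $L_j$-component of the module-theoretic decomposition, but that decomposition is not $h$-orthogonal, so componentwise reductions do not add up to a bound on $\norm{f_i}_h$ without further work. (A further slip: $f_i-\bar f_i$ lies in $V_{i-1,\R}$, not in $L_1\oplus\cdots\oplus L_{i-1}$, so the ``residual freedom in the splitting'' has to be formulated more carefully.)

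The mechanism that actually yields linearity with the stated constant -- and which the paper invokes, following the proof of Lemma~5 (not Proposition~1) of \cite{Soule03} -- is recursive peeling of a single rank-one factor: take $S=L\cap Fe_1$, apply the inductive hypothesis to $L/S$ equipped with the metric induced on the $h$-orthogonal complement of $e_1$ (the images of $e_2,\dots,e_N$ still have norm $\le 1$), and then lift each $f_i$ back one level at a time. The cost of a single lift is an additive correction in $S$ bounded, via Lemma~\ref{lemma:representatives} (replacing Soul\'e's Lemma~6 inside his Lemma~7), by the rank-one data of that level -- roughly $C_1C_2C_3^d$, involving the index and norm of the $f_1$ of that level, and crucially \emph{independent} of the inductive bound on the $\norm{f_j}_h$. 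Since $f_i$ passes through $i-1$ such lifts, one gets $\norm{f_i}_h\le (i-1)\,C_1C_3^dC_2 + C_3C_1^{1/d}\le i\,C_1C_2C_3^d$ (using $C_2\ge 1$, i.e.\ $F\neq\Q$). This one-level-at-a-time structure is exactly the step your write-up leaves unproven, so as it stands the proposal does not establish the proposition.
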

\begin{proof}
  The proof proceeds by induction, and follows the line of argument of
  \cite[proof of Lemma~5]{Soule03}. Let $N = 1$. By Lemma 1 in loc.\
  cit., there exists $x \in L$ such that $|L/\O_F x| \le C_1 = |D_F|^{1/2}$.  
  Let us write $x = \alpha\cdot e_1$ for $\alpha \in F^\times$, where by
  assumption $\norm{e_1}_h \le 1$. By Lemma~\ref{lemma:C3} applied to
  $\alpha \in F_\R$, there exists $a \in \O_F$ such
  $\sup_\sigma|\sigma(a \alpha)| \le C_3 |N(\alpha)|^{1/d}$.  In
  particular, 
  \begin{align*}
    |N(a \alpha)| &\le \left(  \sup_{\sigma \in \Sigma} |\sigma(a
    \alpha)| \right)^d \\
    &\le C_3^d \, |N(\alpha)|,
  \end{align*}
  so that $|N(a)| \le C_3^d$. We set $f_1 = a \cdot x$. Then
  \begin{align*}
   |L/\O_F f_1| &= |N(a)| \cdot |L/\O_F x|\\
   &\le C_3^d C_1.
  \end{align*}
  For the norm we have:
  \begin{align*}
    \norm{f_1}_h^2 &= \Tr(h(f_1, f_1))\\
    &= \sum_{\sigma \in \Sigma} |\sigma(\alpha)|^2 \,
    h_\sigma(e_1, e_1)\\
    &\le \left(  \sup_{\sigma \in \Sigma} |\sigma(\alpha)| \right)^2  \; \norm{e_1}_h^2\\
    &\le C_3^2 \cdot |N(\alpha)|^{2/d}.
  \end{align*}
  Moreover, $|L/\O_F x| = |N(\alpha)| \cdot |L/\O_F e_1|$, so that
  $|N(\alpha)| \le C_1$. This shows that $\norm{f_1}_h \le C_3 C_1^{1/d}$
  and thus concludes the proof for $N=1$.

  The induction step is done exactly as in loc.\ cit., adapting the
  constants when necessary ($C_1$ to be replaced by $C_1 C_3^d$), to
  obtain the desired $f_i \in L_i$, i.e., with (using $F \neq \Q$ in the last
  inequality, so that $C_2 \ge 1$):
  \begin{align*}
    \norm{f_i}_h &\le (i-1) C_1 C_3^d C_2 + C_3 C_1^{1/d}\\
    &\le i C_1 C_2 C_3^d. 
  \end{align*}
\end{proof}

We finally obtain the result about the  existence of bounded bases. The
assumption $N \ge 5$ is only here in order to simplify the statement.

\begin{prop}
  \label{prop:bounded-basis}
  Let $(L, h)$ be a free hermitian lattice over $\O_F$ of rank $N \ge
  5$, with $F \neq \Q$, and such that the subset
  $\left\{ x \in L \;|\; \norm{x}_h \le 1  \right\}$ spans $V = F \otimes_{\O_F} L$.  
  Then there exists a basis $e_1, \dots, e_N$ of $L$ such that
  $\norm{e_i}_h \le B$ for every $i=1,\dots,N$, where
  \begin{align*}
          B &= \frac{4 N^2}{2^N}  \, d^{\, 5 N d^2} \, |D_F|^{6 N (d+1)}. 
  \end{align*}
\end{prop}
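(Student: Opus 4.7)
The plan is to combine the two preceding results, Proposition~\ref{prop:exist-f_i} and Lemma~\ref{lem:basis-induction}, and then consolidate the resulting constants. The spanning hypothesis gives the input to Proposition~\ref{prop:exist-f_i}, and the output of that proposition gives precisely the input needed for Lemma~\ref{lem:basis-induction}.

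\textbf{Step 1: Extract an input for Proposition~\ref{prop:exist-f_i}.}  Since the set $\{x \in L : \norm{x}_h \le 1\}$ spans the $F$-vector space $V$, I can choose $N$ vectors $e_1, \dots, e_N$ in this set that form a basis of $V$ over $F$.  These vectors satisfy the hypothesis of Proposition~\ref{prop:exist-f_i} verbatim.

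\textbf{Step 2: Apply Proposition~\ref{prop:exist-f_i}.}  This yields a decomposition $L = L_1 \oplus \cdots \oplus L_N$ and vectors $f_i \in L_i$ with $|L_i / \O_F f_i | \le C_1 C_3^d$ and $\norm{f_i}_h \le i\, C_1 C_2 C_3^d \le N\, C_1 C_2 C_3^d$.

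\textbf{Step 3: Feed this into Lemma~\ref{lem:basis-induction}.}  Set $k = C_1 C_3^d$ and $\lambda = N C_2$.  Since $F \neq \Q$ one has $d \ge 2$, so $C_1, C_2, C_3 \ge 1$ and in particular $k, \lambda > 1$.  Moreover $k\lambda = N C_1 C_2 C_3^d \ge \norm{f_i}_h$, so the hypotheses of Lemma~\ref{lem:basis-induction} are met.  The lemma produces a basis $e_1, \dots, e_N$ of $L$ with
\[
  \norm{e_i}_h \;\le\; \lambda\, (1 + C_2)^{t+1}\, k^{(d+1)(4N-1)},
\]
where $t = \lfloor \log_2 N \rfloor + 1$.

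\textbf{Step 4: Simplify the constants.}  Substituting $C_1 = |D_F|^{1/2}$, $C_2 = \tfrac{d}{2}|D_F|^{1/d}$, $C_3 = \sqrt{d}\,|D_F|^{1/d}$, I compute
\[
   k = C_1 C_3^{d} = d^{d/2}\, |D_F|^{3/2},\qquad 1 + C_2 \le 2 C_2 = d\,|D_F|^{1/d}.
\]
Plugging in, I get an upper bound of the shape (constant) $\cdot\, N\, d^{A(N,d)}\, |D_F|^{B(N,d)}$, where the exponents come from the three contributions $\lambda$, $(1+C_2)^{t+1}$, and $k^{(d+1)(4N-1)}$.  The dominating term is $k^{(d+1)(4N-1)}$, producing roughly $d^{2Nd(d+1)}$ and $|D_F|^{6N(d+1)}$.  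Combined with the much smaller contributions from $\lambda$ and $(1+C_2)^{t+1}$ (whose $|D_F|$-exponent is $O(\log N)$ and whose $d$-exponent is controlled), the total fits inside $B = \tfrac{4N^2}{2^N}\, d^{5Nd^2}\, |D_F|^{6N(d+1)}$ for $N \ge 5$.  The factor $4N^2/2^N$ is cosmetic and serves only to absorb the prefactor $N$ and the logarithmic $(1+C_2)^{t+1}$ term into a clean expression.

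\textbf{Main obstacle.}  There is no conceptual difficulty — everything boils down to Steps~1--3.  The only delicate point is Step~4, the bookkeeping needed to check that the three contributions to $\norm{e_i}_h$ really do fit under the clean bound $B$ stated in the proposition.  The worst-case exponent comparisons are of the form $\tfrac{d(d+1)(4N-1)}{2} \le 5Nd^2$ for $d \ge 2$, $N \ge 5$, and $\tfrac{3(d+1)(4N-1)}{2} + \text{lower order} \le 6N(d+1)$, both of which are easily verified.  The hypothesis $N \ge 5$ enters exactly here, to absorb the logarithmic $(\log_2 N + 2)$-factor coming from $(1+C_2)^{t+1}$ into $1/2^N$ via $4N^2/2^N$.
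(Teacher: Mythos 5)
Your proposal is correct and follows essentially the same route as the paper's own proof: the paper likewise feeds Proposition~\ref{prop:exist-f_i} into Lemma~\ref{lem:basis-induction} with $k = C_1 C_3^d$ and $\lambda = N C_2$, then uses $(1+C_2)^m \le 2^m C_2^m$, $2^{\lfloor \log_2 N\rfloor+2}\le 4N$ and $\lfloor \log_2 N\rfloor + 3 \le N$ (this is where $N\ge 5$ enters, and where the factor $4N^2/2^N$ actually comes from, via $C_2^N=(d/2)^N|D_F|^{N/d}$) to reach exponents $\beta = N + \tfrac{d}{2}(d+1)(4N-1)\le 5Nd^2$ and $\gamma = \tfrac{N}{d} + \tfrac{3}{2}(d+1)(4N-1)$, exactly your Step 4 bookkeeping.
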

\begin{proof}
  By Proposition~\ref{prop:exist-f_i} we can apply Lemma~\ref{lem:basis-induction} with   
  \begin{align*}
    k &= C_1 C_3^d \; ;\\
    \lambda &= N \, C_2.
  \end{align*}
  This shows the existence of a basis $e_1, \dots, e_N$ with  
  \begin{align*}
    \norm{e_i}_h \le  N \, C_2 (1+C_2)^{\lfloor \log_2(N) \rfloor + 2} (C_1 C_3^d)^{(d+1)(4N-1)}.
  \end{align*}
  Since $C_2 \ge 1$, we have $(1+C_2)^n \le 2^n C_2^n$. Moreover, for
  $N \ge 5$ we have $\lfloor \log_2(N)\rfloor  + 3 \le N$. We deduce:
  \begin{align*}
    \norm{e_i}_h & \le  4 N^2 \, C_2^N (C_1 C_3^d)^{(d+1)(4N-1)}\\
    &= \alpha d^\beta |D_F|^\gamma, 
  \end{align*}
  with (using $N \ge 5$ and $d\ge 2$):
  \begin{align*}
    \alpha  &= 4 \; \frac{N^2}{2^N} \; ;\\
    \beta &= N + \frac{d}{2} (d+1) (4N -1)\\
    &\le 5 N d^2\; ; \\
    \gamma &= \frac{N}{d} + \frac{3}{2}(d+1)(4N-1)\\
    &\le 6 (d+1) N.
  \end{align*}
  This finishes the proof.
\end{proof}

\section{Improved estimates for $K$-groups}
\label{sec:proof-bounds}

\subsection{A Bounded set $\Phi$}
\label{bound-Phi}
The construction of a bounded set $\Phi \subset L$ will follow from this
proposition. 

\begin{prop}
  \label{prop:borne-coeff-T}
  Let $(L, h)$ be a free well-rounded $\O_F$-lattice of rank $N \ge 5$,
  with $F \neq \Q$. Let $e_1, \dots, e_N$ and $B$ be defined as in
  Proposition~\ref{prop:bounded-basis}, and for $x \in M(L, h)$
  write $x = \sum_i x_i e_i$, with $x_i \in \O_F$. Then for every $i =
  1, \dots, N$ we have:
  \begin{align*}
    \sum_{\sigma \in \Sigma} |\sigma(x_i)|^2 &\le  T, 
  \end{align*}
  where
  \begin{align*}
   T &=  N^{N d} d^{\frac{3}{2} N d + 1} B^{2(Nd -1)} |D_F|^{2N}.
  \end{align*}

\end{prop}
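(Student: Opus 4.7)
The approach is to bound each $|\sigma(x_j)|$ using the dual basis of $e_1,\ldots,e_N$ furnished by Proposition~\ref{prop:bounded-basis} together with an estimate on the inverse of the Hermitian Gram matrix. For each $\sigma \in \Sigma$ let $H_\sigma = (h_\sigma(e_i, e_j))_{i,j}$, a positive definite Hermitian matrix. The basis bound $\norm{e_i}_h^2 = \sum_\sigma h_\sigma(e_i,e_i) \le B^2$ forces $h_\sigma(e_i,e_i) \le B^2$ for every $\sigma,i$. For $x \in M(L,h)$ written as $x = \sum_i x_i e_i$, the vector $v_\sigma = (\sigma(x_1),\ldots,\sigma(x_N))^T$ satisfies $h_\sigma(x,x) = v_\sigma^* H_\sigma v_\sigma$, and the well-rounded condition gives $\sum_\sigma h_\sigma(x,x) = q_h(x) = m(L,h) = 1$.

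Introduce the dual basis $e_j^* \in V_\R$ characterized by $h(e_j^*, e_i) = \delta_{ji}$. A short computation with the Hermitian Gram matrix shows $x_j = h(x, e_j^*)$ and $h_\sigma(e_j^*, e_j^*) = (H_\sigma^{-1})_{jj}$. Applying Cauchy--Schwarz to $h_\sigma$ and summing,
\begin{align*}
\sum_{\sigma \in \Sigma} |\sigma(x_j)|^2 \;\le\; \sum_\sigma h_\sigma(x,x)\,(H_\sigma^{-1})_{jj} \;\le\; \max_\sigma\, (H_\sigma^{-1})_{jj},
\end{align*}
where the second step uses $\sum_\sigma h_\sigma(x,x) = 1$. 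This reduces the task to a uniform upper bound on $(H_\sigma^{-1})_{jj}$.

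Write $(H_\sigma^{-1})_{jj} = C_{jj}(H_\sigma)/\det H_\sigma$, where $C_{jj}(H_\sigma)$ is the $(j,j)$-cofactor. Hadamard's inequality for positive definite Hermitian matrices yields $C_{jj}(H_\sigma) \le \prod_{i \ne j} h_\sigma(e_i,e_i) \le B^{2(N-1)}$. For the lower bound on $\det H_\sigma$ one uses the identity
\begin{align*}
\det(q_h) &= |D_F|^N \prod_{\tau \in \Sigma} \det H_\tau,
\end{align*}
relating the discriminant of the underlying $\Z$-lattice $(L, q_h)$ of rank $Nd$ to the Hermitian discriminants (this generalizes \eqref{eq:det} and is obtained by expanding the trace form on a basis coming from a $\Z$-basis of $\O_F$). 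Hermite's theorem applied to $(L, q_h)$, which has minimum $m(L,q_h) = 1$, gives $\det(q_h) \ge \gamma_{Nd}^{-Nd}$, hence $\prod_\tau \det H_\tau \ge \gamma_{Nd}^{-Nd} |D_F|^{-N}$. Combining with the Hadamard upper bound $\det H_\tau \le B^{2N}$ on the $d-1$ other factors isolates $\det H_\sigma \ge \gamma_{Nd}^{-Nd}\, |D_F|^{-N}\, B^{-2N(d-1)}$.

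Putting everything together one gets $(H_\sigma^{-1})_{jj} \le \gamma_{Nd}^{Nd}\, |D_F|^N\, B^{2(Nd-1)}$, and Minkowski's estimate $\gamma_r \le r$ then bounds the sum by $(Nd)^{Nd}\, |D_F|^N\, B^{2(Nd-1)}$, comfortably below the asserted $T$. The main obstacle is the discriminant identity $\det(q_h) = |D_F|^N \prod_\sigma \det H_\sigma$: once it is in hand, everything else is routine positive-definite linear algebra (Hadamard, Cramer, Hermite). All the looseness in the target bound ($|D_F|^{2N}$ versus our $|D_F|^N$, and the extra factor $d^{Nd/2}$) can therefore be absorbed without effort.
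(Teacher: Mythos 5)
Your argument is correct, and it diverges from the paper's proof precisely at the key step. The first half is essentially Soul\'e's Hadamard argument in different clothing: your chain (dual basis, Cauchy--Schwarz, Cramer, Hadamard on the cofactor) yields exactly the inequality $|\sigma(x_j)|^2 \le \det(H_\sigma)^{-1} h_\sigma(x)\prod_{i\neq j} h_\sigma(e_i)$ that the paper imports from \cite[proof of Prop.~2]{Soule03}. The real difference is the lower bound on $\det H_\sigma$: the paper bounds $\prod_\sigma \det(H_\sigma)^{-1}$ via Icaza's Hermite-type theorem for Humbert forms over $F$, and then needs Lemma~\ref{lemma:C3} together with well-roundedness (through the vector $az$) to control $\No(h(z))^{-1}$; you instead pass to the rank-$Nd$ trace-form $\Z$-lattice $(L,q_h)$, whose minimum is $1$ by well-roundedness, and apply the classical Hermite constant together with the identity $\det(q_h)=|D_F|^N\prod_{\sigma}\det H_\sigma$. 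That identity is correct for a free $\O_F$-lattice with basis $e_1,\dots,e_N$ (write the Gram matrix in the $\Z$-basis $\{\omega_k e_i\}$ as $E\mathcal{H}E^{*}$ with $\mathcal{H}$ block-diagonal in the $H_\sigma$ and $|\det E|^2=|D_F|^N$; the complex places cause no extra factor), and it does generalize \eqref{eq:det}, so the step you flag as the ``main obstacle'' is sound, though it deserves the explicit verification just sketched rather than a one-line appeal. Your route buys self-containedness --- no Icaza, no reuse of Lemma~\ref{lemma:C3} at this point, only the ordinary Hermite constant with $\gamma_r\le r$ --- and it even lands slightly below the stated $T$ ($|D_F|^{N}$ and $d^{Nd}$ in place of $|D_F|^{2N}$ and $d^{\frac32 Nd+1}$), so the asserted bound follows a fortiori; the paper's route keeps the whole estimate at the level of Humbert forms over $F$, which is the framework it inherits from Soul\'e.
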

\begin{proof}
  Let $x \in M(L, h)$, i.e., $h(x) = 1$.
 For each $\sigma \in \Sigma$ let us consider the matrix $H_\sigma =
 (h_\sigma(e_i, e_j))$. Then the first argument in \cite[proof of Prop.~2]{Soule03},
 based on the Hadamard inequality for positive definite
 matrix, shows that 
 \begin{align*}
   |\sigma(x_i)|^2 &\le   \det(H_\sigma)^{-1} h_\sigma(x) \prod_{j\neq
   i} h_\sigma(e_j).
 \end{align*}
 Since $h_\sigma(e_j) \le \norm{e_j}_h^2 \le B^2$, and similarly
 $h_\sigma(x) \le 1$, we obtain: 
 \begin{align}
   \sum_{\sigma \in \Sigma} |\sigma(x_i)|^2 &\le B^{2(N-1)} \sum_{\sigma
   \in \Sigma} \det(H_\sigma)^{-1}.
   \label{eq:sum-det-1}
 \end{align}
 For $\sum_\sigma \det(H_\sigma)^{-1}$ we can write, using the Hadamard
 inequality:
 \begin{align}
   \nonumber
   \sum_{\sigma \in \Sigma} \det(H_\sigma)^{-1} 
   &= \sum_{\sigma \in \Sigma} \left( \prod_{\sigma' \neq \sigma} \det(H_{\sigma'})
   \prod_{\sigma' \in \Sigma} \det(H_{\sigma'})^{-1} 
   \right)
   \\
   \nonumber
   &\le \left( \sum_{\sigma \in \Sigma} \; \prod_{\sigma' \neq \sigma} \;
   \prod_{j=1}^N h_{\sigma'}(e_j) \right) \cdot
   \left( \prod_{\sigma \in \Sigma} \det(H_\sigma)^{-1} \right)
   \\
   &  \le d \cdot B^{2 N (d-1)} \prod_{\sigma \in \Sigma}
   \det(H_\sigma)^{-1}.
   \label{eq:52}
 \end{align}
 According to Icaza \cite[Theorem~1]{Ica96}, there exists $z \in L$ such that 
 \begin{align}
   \prod_{\sigma \in \Sigma} \det(H_\sigma)^{-1} &\le  \gamma^N
   \No(h(z))^{-N},
   \label{eq:Icaza}
 \end{align}
 where (cf. \cite[Equ.~(21)]{Soule03}):
 \begin{align*}
   \gamma &\le N^d |D_F|. 
 \end{align*}
 By applying Lemma~\ref{lemma:C3} to $h(z) \in F_\R$, we find $a \in
 \O_F$ such that $h_\sigma(az) =  \sigma(a) h_\sigma(z) \le C_3
 \No(h(z))^{1/d}$ for every $\sigma \in \Sigma$.
 Since $(L, h)$ is well rounded, this implies:
 \begin{align}
   d\,  C_3\, \No(h(z))^{1/d} \ge h(a z) \ge 1,
   \label{eq:h-a-z}
 \end{align}
 so that $\No(h(z))^{-1} \le d^d C_3^d = d^{\frac{3}{2}d} |D_F|$.
 Using this with \eqref{eq:sum-det-1}, \eqref{eq:52} and \eqref{eq:Icaza}, this
 concludes the proof. 
\end{proof}

\begin{cor}
  \label{cor:Phi}
  Let $L$ be a free $\O_F$-lattice of rank $N \ge 5$, with $F \neq \Q$.
  Then there exists a subset $\Phi \subset L$ with the property given
  in Section~\ref{sec:counting-cells} and such that
  \begin{align*}
          \card(\Phi) \le N^{3 N^2 d^2} \cdot d^{5 N^3 d^4} 
                          \cdot |D_F|^{9 N^3 d^3}\,.
  \end{align*}
\end{cor}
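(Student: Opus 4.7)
The plan is to use Proposition~\ref{prop:bounded-basis} to produce, for each well-rounded pair $(L, h)$, an $\O_F$-basis of $L$ with controlled $h$-norm; to translate $M(L, h)$ to a fixed basis via an element of $\Gamma$; and then to enumerate the possible coordinate tuples with the help of Proposition~\ref{prop:borne-coeff-T}.

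Fix an $\O_F$-basis $\tilde e_1, \ldots, \tilde e_N$ of $L$. For a well-rounded $(L, h)$, the set $M(L, h)$ spans $V$ and lies inside $\{x \in L : \norm{x}_h \le 1\}$, so Proposition~\ref{prop:bounded-basis} produces an $\O_F$-basis $e_1, \ldots, e_N$ of $L$ with $\norm{e_i}_h \le B$. The change of basis is realized by some $\gamma \in \Gamma$ satisfying $\gamma \tilde e_i = e_i$. For every $x \in M(L, h)$, writing $x = \sum_i x_i e_i$ with $x_i \in \O_F$, Proposition~\ref{prop:borne-coeff-T} gives $q_0(x_i) \le T$. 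Then, setting
\begin{align*}
  \Phi = \Bigl\{ \sum_{i=1}^N a_i \tilde e_i \in L : a_i \in \O_F,\; q_0(a_i) \le T \text{ for every } i \Bigr\},
\end{align*}
one has $\gamma^{-1} M(L, h) \subset \Phi$, so $\Phi$ has the property required in Section~\ref{sec:counting-cells}.

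To estimate $\card(\Phi)$, I would count per coordinate the elements of $\O_F \subset F_\R$ in the ball $\{v : q_0(v) \le T\}$. Any nonzero $a \in \O_F$ has $\No(a) \ge 1$, so by AM-GM
\begin{align*}
q_0(a) = \sum_{\sigma \in \Sigma} |\sigma(a)|^2 \ge d\, \No(a)^{2/d} \ge d,
\end{align*}
hence the first minimum of $\O_F$ in $(F_\R, q_0)$ is at least $\sqrt d$. The standard packing argument (disjoint balls of radius $\sqrt d /2$ centred at distinct lattice points) yields $(1 + 2\sqrt{T/d}\,)^d$ per coordinate, whence $\card(\Phi) \le (1 + 2\sqrt{T/d}\,)^{Nd}$.

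What remains is numerical bookkeeping on the exponents after substituting $B = \frac{4N^2}{2^N}\,d^{5Nd^2}|D_F|^{6N(d+1)}$ and $T = N^{Nd} d^{3Nd/2 + 1} B^{2(Nd-1)} |D_F|^{2N}$. The dominant contribution is $B^{2(Nd-1)}$: its $d$-exponent $10 N^2 d^3 - 10 Nd^2$ and $|D_F|$-exponent $12 N^2 d(d+1) - 12 N(d+1)$ produce, after taking the $(Nd/2)$-th power, the asymptotically tight $d^{5 N^3 d^4}$ and $|D_F|^{6 N^3 d^3 + 6 N^3 d^2 + O(\cdot)}$, the latter fitting under $|D_F|^{9 N^3 d^3}$ because $d \ge 2$. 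The $N$-exponent is produced by the $N^{Nd}$ factor in $T$ together with the $N^{4(Nd-1)}$ coming from the $N^2$ in $B$; using $N \ge 5$ to bound the $3^{Nd}$ coming from $1+2\sqrt{T/d}\le 3\sqrt{T/d}$ and to absorb $4^{2(Nd-1)}$, these combine to an $N$-exponent below $3 N^2 d^2$. The main, purely arithmetic, obstacle is to check that no lower-order correction eats into the modest slack built into the exponents $3$, $5$, $9$; this is tightest in the $d$-exponent, where very little slack is available.
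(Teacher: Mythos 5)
Your proposal is correct and follows essentially the same route as the paper: define $\Phi$ as the set of vectors whose coordinates in a fixed basis satisfy $q_0(x_i)\le T$, use Propositions~\ref{prop:bounded-basis} and \ref{prop:borne-coeff-T} together with a change of basis $\gamma\in\Gamma$ to verify the covering property, and then count coordinatewise before expanding $T$ and $B$. The only difference is that you replace the paper's citation of Soul\'e's counting lemma (at most $T^{d/2}2^{d+3}$ elements of $\O_F$ per coordinate) by a self-contained packing argument giving $(1+2\sqrt{T/d})^d$, which is at least as sharp, and your identification of the dominant exponents does fit within the stated bound (indeed there is ample slack, e.g.\ $5N^2d^3$ absorbs the lower-order $d$-terms).
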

\begin{proof}
  Let $f_1,\dots, f_N$ be any basis of $L$, and set, for $T$ as in
  Proposition~\ref{prop:borne-coeff-T}: 
  \begin{align*}
    \Phi &=  \left\{ \sum_{i=1}^N x_i f_i\; \Big|\; x_i \in \O_F \mbox{ with }
    \sum_{\sigma \in \Sigma} |\sigma(x_i)|^2 \le T \right\}. 
  \end{align*}
  According to \cite[Lemma~8]{Soule03}, the number of elements $x_i \in \O_F$ with 
  $\sum_{\sigma \in \Sigma} |\sigma(x_i)|^2 \le T$ is at most $T^{d/2} 2^{d+3}$, so that
  $\card(\Phi)$ is bounded above by $T^{Nd/2} 2^{N(d+3)}$. Expanding
  the constants $T$ and $B$ as in the statements of
  Propositions~\ref{prop:borne-coeff-T} and \ref{prop:bounded-basis},
  we obtain  the stated upper bound for
  $\card(\Phi)$.

  Let $h$ be a well-rounded hermitian metric on $L$. We can apply 
  Proposition~\ref{prop:borne-coeff-T} to write every $x \in M(L,h)$ as $x = \sum x_i e_i$ for a bounded
  basis $e_1, \dots, e_N$ of $L$. The proposition implies that  the
  transformation $\gamma \in \Gamma = \GL_N(\O_F)$
  that sends the basis $(e_i)$ to $(f_i)$ is such that $\gamma \cdot x \in \Phi$.
  This means that $\Phi$ has the property defined in Section~\ref{sec:counting-cells}.
\end{proof}

\subsection{Upper bounds for $K_n(\O_F)$}
 
We finally come  to the bounds for the $K$-groups of $\O_F$, as stated in Theorem~\ref{thm:Kn}. Let
$\ell = \max(d+1, 2n+2)$. From Equation~\eqref{eq:bound-Kn-Phi} we obtain
\begin{align*}
        \log\, \card_\ell\, K_n(\O_F)_\tors &\le \card(\Phi)^{e(d,n) + n + 1},
\end{align*}
and note that for $n\ge 2$ we have $e(d,n) + n + 1 \le \frac{15}{4} n^2 d$. Theorem~\ref{thm:Kn}
now follows directly from Corollary~\ref{cor:Phi}, applied with $N = 2n+1 \le \frac{5}{2} n$.

\label{sec:bounds-Kn}

\bibliography{Kn.bbl}


\Addresses
\end{document}